\newcommand{\textcyr}[1]{%
 {\fontencoding{OT2}\fontfamily{wncyr}\fontseries{m}\fontshape{n}\selectfont #1}}
\newcommand{\Sha}{{\mbox{\textcyr{Sh}}}}
\newcommand{\defi}[1]{\textsf{#1}} 
\def\act#1#2%
\newcommand{\Z}{{\mathbb Z}}
\newcommand{\F}{{\mathbb F}}
\newcommand{\A}{{\mathbb A}}
\newcommand{\PP}{{\mathbb P}}
\newcommand{\vbar}{{\overline{v}}}
\newcommand{\Abar}{{\overline{A}}}
\newcommand{\Dbar}{{\overline{D}}}
\newcommand{\Fbar}{{\overline{\F}}}
\newcommand{\calO}{{\mathcal O}}
\newcommand{\To}{\longrightarrow}
\DeclareMathOperator{\Cov}{Cov}
\DeclareMathOperator{\Map}{Map}
\DeclareMathOperator{\Hom}{Hom}
\DeclareMathOperator{\Gal}{Gal}
\DeclareMathOperator{\Br}{Br}
\DeclareMathOperator{\Pic}{Pic}
\DeclareMathOperator{\Jac}{Jac}
\DeclareMathOperator{\HH}{H}
\DeclareMathOperator{\Spec}{Spec}
\DeclareMathOperator{\Mor}{Mor}
\newcommand{\etale}{\operatorname{\textup{\'et}}}
\newcommand{\fab}{\operatorname{ab}}
\newcommand{\isog}{\operatorname{isog}}
\newcommand{\etaleisog}{\operatorname{\textup{\'et}-isog}}
\newtheorem{Theorem}{Theorem}[section]
\newtheorem{Lemma}[Theorem]{Lemma}
\newtheorem{Proposition}[Theorem]{Proposition}
\newtheorem{Corollary}[Theorem]{Corollary}
\newtheorem{Remark}[Theorem]{Remark}
\newtheorem{Question}[Theorem]{Question}
\theoremstyle{definition}
\numberwithin{equation}{section}
\begin{document}
\title{The Brauer-Manin obstruction for constant curves over global function fields}
\author{Brendan Creutz}
\address{School of Mathematics and Statistics, University of Canterbury, Private Bag 4800, Christchurch 8140, New Zealand}
\email{brendan.creutz@canterbury.ac.nz}
\urladdr{http://www.math.canterbury.ac.nz/\~{}b.creutz}

\author{Jos\'e Felipe Voloch}
\address{School of Mathematics and Statistics, University of Canterbury, Private Bag 4800, Christchurch 8140, New Zealand}
\email{felipe.voloch@canterbury.ac.nz}
\urladdr{http://www.math.canterbury.ac.nz/\~{}f.voloch}

\begin{abstract}
Let $\F$ be a finite field and $C,D$ smooth, geometrically irreducible, proper curves over $\F$ and set $K = \F(D)$. We consider Brauer-Manin and abelian descent obstructions to the existence of rational points and to weak approximation for the curve $C \otimes_\F K$. In particular, we show that Brauer-Manin is the only obstruction to weak approximation and the Hasse principle in the case that the genus of $D$ is less than that of $C$. We also show that we can identify the points corresponding to non-constant maps $D \to C$ using Frobenius descents.
\end{abstract}

\maketitle

\section{Introduction}

Let $C$ be a smooth, geometrically irreducible, proper curve of genus $>1$ over a global field $K$.
The question of whether the Brauer-Manin obstruction is the only obstruction
to the Hasse principle or weak approximation for $C$ was raised around 1999
by Scharaschkin and Skorobogatov. Not much progress has been made in the number field case, but a substantial 
amount of numerical evidence has been obtained, notably \cite{BruinStoll}.
By contrast, the question in the function field case has been settled
affirmatively for ``most'' curves \cite{PoonenVoloch}, namely those curves
whose Jacobian does not have an isotrivial factor and satisfy a certain
condition on the $p$-power torsion points, where $p$ is the characteristic of $K$.
The latter condition has largely been removed due to recent work of R\"ossler
\cite{Rossler1}. 
However, these results do not address the case of isotrivial or
even constant curves and the purpose of this paper is to consider this case. 

To put our results in context we begin with a summary of the general situation, assuming that $C$ is embedded in its Jacobian $J$ and that $J$ has finite Tate-Shafarevich group. If $S_C$ is the set of primes of good reduction for (some model of) $C$, then there is a commutative diagram
\[
	\xymatrix{
		C(K) \ar@{.>}[r]\ar@{.>}[d] & J(K) \ar[d] \\
		\prod_{v \in S_C} C(\F_v) \ar[r] & \prod_{v \in S_C} J(\F_v)\,,
	}
\]
where $\F_v$ denotes the residue field at the prime $v$ of $K$. Scharaschkin \cite{Scharaschkin} considered the set $C^\textup{MW-Sieve}$, which is the intersection of the topological closures of the images of $J(K)$ and $\prod_{v \in S_C}C(\F_v)$ inside $\prod_{v \in S_C}J(\F_v)$. He showed that if $C^\textup{MW-Sieve}$ is empty, then the set $C(\A_K)^{\Br}$ of adelic points orthogonal to the Brauer group of $C$ is too, and used this to give examples of curves of genus at least $2$ which are counterexamples to the Hasse principle explained by the Brauer-Manin obstruction. Poonen conjectured, in the number field case, that every counterexample to the Hasse principle could be explained this way \cite{Poonen}. Around the same time Stoll conjectured, in the number field case, that the set $C(\A_K)_\bullet^{\Br}$ of adelic points, modified at archimedean primes, orthogonal to the Brauer group of $C$ is equal to the (topological closure of the) image of $C(K)$ \cite{Stoll}. Recall that $C(K)$ can be infinite when $C$ is an isotrivial curve over a global function field,  even if the genus of $C$ is greater than $1$. For an example showing that it is in general necessary to take the topological closure see \cite[Remark 1.3]{PoonenVoloch}. One may also ask if $C^\textup{MW-Sieve}$ is equal to the topological closure of the image of $C(K)$ in $\prod_{v \in S_C}J(\F_v)$. There are logical dependencies between these statements as follows.
\begin{align}\label{conjectures}
\xymatrix{
	\fbox{$\,\overline{C(K)} = C(\A_K)_\bullet^{\Br} \text{ in } C(\A_K) \,$} \ar@{=>}[r] & \fbox{$\,C(K)= \emptyset \Rightarrow C(\A_K)^{\Br} = \emptyset \,$} \\
	\fbox{$\,\overline{C(K)} = C^\textup{MW-Sieve} \text{ in } \prod_{v \in S_C}C(\F_v)\,$} \ar@{=>}[r] & \fbox{$\,C(K)= \emptyset \Rightarrow C^\textup{MW-Sieve} = \emptyset \,$}\ar@{=>}[u] 
}
\end{align}
All four statements are known to hold when $J(K)$ is finite by a result of Scharaschkin \cite{Scharaschkin}. The statement in the upper-left is the conclusion of the theorem of Poonen-Voloch mentioned above; the theorem says nothing about the statement in the lower-right.

\subsection{Results for constant curves over global function fields}

Let $\F$ be a finite field and let $C,D$ be smooth, geometrically irreducible, proper curves over $\F$. Set $K = \F(D)$. We consider Brauer-Manin and finite descent obstructions to the existence of $K$-rational points on the curve $C \otimes_\F K$, which we also denote by $C$. We remark that $C$ may be embedded in its Jacobian $J$ since it has a $0$-cycle of degree $1$ defined over $\F$ by the Hasse-Weil bounds and that the Tate-Shafarevich group of $J$ is finite by results of Tate and Milne \cite[Theorem 3]{Milne}.

Let $C(\A_{K,\F}) := \prod_v C(\F_v) \subset C(\A_K)$ be the set of reduced adelic points on $C$ (see Section~\ref{sec:redadeles} for details). Since $C$ can be defined over $\F$ we have a model with everywhere good reduction and so $C^\textup{MW-Sieve} \subset C(\A_{K,\F}) \subset C(\A_K)$.

\begin{Theorem}\label{Thm1}
	$C(\A_K)^{\Br} = C(K) \cup C^\textup{MW-Sieve}$.
\end{Theorem}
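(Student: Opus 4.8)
The plan is to reduce the statement to an assertion about the closure of $J(K)$ in the adelic points of the Jacobian, and then to analyse that closure place by place. Fix the embedding $C\hookrightarrow J$. Since $\Sha(J/K)$ is finite and $C$ has a divisor class of degree $1$, the Brauer group of $C$ is controlled by $J$: one has $\Br(C)/\Br(K)\cong\HH^1(K,J)$, the Brauer--Manin pairing on $C(\A_K)$ factors through $C\hookrightarrow J$ and the local Tate pairings, and Poitou--Tate duality for $J$ then gives $C(\A_K)^{\Br}=C(\A_K)\cap\overline{J(K)}$, where $\overline{J(K)}$ is the topological closure of the image of $J(K)$ in $J(\A_K)$; I would take this from \cite{Scharaschkin, Stoll}. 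Because $C$ (hence $J$) has good reduction everywhere, $J(\A_K)=\prod_v J(K_v)$ and each $J(K_v)$ splits canonically, via the constant section, as $J(\F_v)\oplus J^1(K_v)$ with $J^1(K_v)=\ker\big(J(K_v)\to J(\F_v)\big)$ the pro-$p$ formal group; write an adelic point as $(\bar x,x')$ with $\bar x\in\prod_v J(\F_v)$ and $x'\in J^1(\A_K):=\prod_v J^1(K_v)$, and call it \emph{reduced} when $x'=0$. It remains to compute $C(\A_K)\cap\overline{J(K)}$.

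First I would identify the reduced part of $\overline{J(K)}$. As $D$ has an $\F$-point, $J(K)=J(\F)\oplus M$ with $M=\Hom_\F(\Jac D,J)$, and each $m\in M$ gives a morphism $\phi_m\colon D\to J$ (Abel--Jacobi followed by $m$), so the image of $J(K)$ in $J(\A_K)$ is the set of points $t+\phi_m$. The crucial point is that the pure formal part $(0,\phi_m')$ already lies in $\overline{J(K)}$: the relative $q$-power Frobenius $\mathrm{Frob}_D$ ($q=\#\F$) produces morphisms $\phi_m\circ\mathrm{Frob}_D^{\,k}\in J(K)$ whose value at a place $v$ is $\mathrm{Frob}_J^{\,k}(\phi_m(x_v))$, periodic in $k$ of period dividing $[\F_v:\F]$, and whose formal part at $v$ lies deep in the formal group (in $J^{\,q^{k}e_v}(K_v)$, $e_v\ge1$) and hence tends to $0$; letting $k$ run through common multiples of the residue degrees of larger and larger finite sets of places gives $\phi_m-\phi_m\circ\mathrm{Frob}_D^{\,k}\to(0,\phi_m')$ in $J(\A_K)$. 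Together with $\rho(\overline{J(K)})=\overline{\rho(J(K))}$ (where $\rho\colon J(\A_K)\to\prod_v J(\F_v)$ is reduction and closures on $\prod_v J(\F_v)$ use the discrete topologies), this yields $\overline{J(K)}=\overline{\rho(J(K))}\oplus\overline{M'}$, with $M'\subseteq J^1(\A_K)$ the image of $M$ under ``formal part''. In particular the reduced points of $C(\A_K)\cap\overline{J(K)}$ are exactly $\prod_v C(\F_v)\cap\overline{\rho(J(K))}=C^{\textup{MW-Sieve}}$; combined with the trivial inclusion $C(K)\subseteq C(\A_K)\cap\overline{J(K)}$, this proves ``$\supseteq$'' in the theorem and disposes of the reduced points.

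It remains to show that a point $(P_v)=(\bar P,P')\in C(\A_K)\cap\overline{J(K)}$ with $P'\ne0$ lies in $C(K)$. By the decomposition $\bar P\in\overline{\rho(J(K))}$ and $P'\in\overline{M'}$, and since $C\hookrightarrow J$ is a closed immersion, $J(K)\cap C(\A_K)=C(K)$; so it suffices to prove such a point lies in $J(K)$, i.e.\ that $P'$ is the formal part of an honest $\phi_m$ with $m\in M$ and that $\bar P$ differs from $\rho(\phi_m)$ by a constant in $J(\F)$. The idea is to localise at a place $v_0$ with $P'_{v_0}\ne0$, to describe $\overline{M'_{v_0}}$ through the formal completions $\widehat m$ of the homomorphisms (equivalently, through $M\otimes\Z_p=\Hom_\F(\Jac D[p^\infty],J[p^\infty])$ acting on the formal groups) evaluated at the fixed local Abel--Jacobi arc $\alpha_{v_0}\in\widehat{\Jac D}(\fm_{v_0})$, and to exploit that $P'_{v_0}$ must simultaneously lie on the one-dimensional formal arc of $C$ through $\bar P_{v_0}$. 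The Frobenius/Verschiebung filtration on these formal groups together with the rigidity of $\Hom$ of abelian varieties over $\F$ should force $P'_{v_0}$ to be the formal expansion at $v_0$ of a genuine morphism $\psi\colon D\to C$, and the constraint that $\bar P$ lie in $\overline{\rho(J(K))}$ at \emph{every} place then propagates this, so that $(P_v)$ is the adelic point of $\psi$ and $(P_v)\in C(K)$.

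The main obstacle is precisely this last step: controlling the closure $\overline{M'}$ --- a genuine pro-$p$, Frobenius-descent object --- against the one-dimensional formal arcs of $C$ and against the global constraint imposed by $\overline{\rho(J(K))}$, so as to exclude ``hybrid'' adelic points that combine the reduction of one source with the formal jet of another. This is where the Frobenius descent techniques of the paper are indispensable; by contrast the reduction in the first paragraph and the Frobenius approximation in the second are comparatively formal.
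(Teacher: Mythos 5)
Your first two paragraphs carry out honest and essentially correct reductions. Using $C(\A_K)^{\Br} = C(\A_K)\cap\overline{J(K)}$ (which follows from Proposition~\ref{prop:BMandDescent} since pullbacks of isogeny torsors along $C\hookrightarrow J$ detect exactly $\iota^{-1}(J(\A_K)^{\isog})$), and the decomposition $J(\A_K)=\prod_v J(\F_v)\oplus \prod_v J^1(K_v)$ coming from constancy, your identification of the reduced slice of $\overline{J(K)}$ with $\overline{\rho(J(K))}$, and consequently of the reduced points of $C(\A_K)^{\Br}$ with $C^{\textup{MW-Sieve}}$, is sound. The Frobenius approximation showing that formal parts $M'$ of global morphisms lie in $\overline{J(K)}$ is also correct and is in the same spirit as the paper's use of \cite[Lemma~2.13]{CVV}. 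This decomposes the problem along the same lines as the paper, where Theorem~\ref{Thm:EtDescent} handles the reduced points and Theorem~\ref{Thm:FrobDescent} handles everything else.

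The gap is exactly where you say it is, and it is a genuine missing idea, not just a technical loose end. You need to show that any $(P_v)\in C(\A_K)\cap\overline{J(K)}$ with nonzero formal part is an honest point of $C(K)$. The sketch you offer (localize at one place, compare the one-dimensional formal arc of $C$ through $\bar P_{v_0}$ with the closure of jets of homomorphisms, invoke Tate rigidity and Frobenius/Verschiebung filtrations) contains no mechanism that actually produces a global point out of these local/formal constraints; in particular, comparing formal arcs in a single completion cannot by itself rule out an adelic point whose components at different places come from different morphisms. The paper's mechanism is entirely different and is the crux of the whole argument: R\"ossler's functorial map $\mu_L\colon \HH^1(L,\ker F)\to\Omega_{L/\F}^{\oplus g}$, which on points reads off pullbacks of the holomorphic differentials, converts the $F$-descent condition into a statement about the image under the canonical map $C\to\PP^{g-1}$. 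If some $\mu_{K_v}(x_v)\neq0$ then the corresponding point of $\PP^{g-1}$ is forced to be $K$-rational (the $g$-tuple of differentials pulled back along a global $y\in J(K)$), and since the canonical map is generically injective this pins down $(x_v)$ as a global point of $C$ (with a short separate argument in the hyperelliptic characteristic-$2$ case). If instead $\mu_{K_v}(x_v)=0$ for every $v$, one lifts through $F$ (this is where the vanishing of $\Sha^1(K,\ker F)$ from \cite{GA-T} is needed so the lift stays attached to the trivial twist) and iterates; unbounded Frobenius-divisibility forces the point to be reduced. Without supplying this differential-geometric input, or a genuine substitute for it, your third paragraph does not close, and the theorem is not proved.

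A secondary point: you write $J(K)=J(\F)\oplus M$ ``as $D$ has an $\F$-point.'' The hypotheses only guarantee a $0$-cycle of degree one on $D$, not a rational point; this is easily repaired by using the Abel--Jacobi map based at that $0$-cycle (as the paper does in Section~\ref{sec:maps}), but it should be stated correctly.
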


In Corollary~\ref{cor:conjectures} we will deduce from Theorem~\ref{Thm1} that the statements on the left in~\eqref{conjectures} are equivalent in the case of constant curves over global function fields (and similarly for those on the right, though this was already known \cite[Proposition 2.2]{CVV}). 

Our proof of Theorem~\ref{Thm1} utilizes the connection between the Brauer-Manin and finite abelian descent obstructions developed in \cite{Stoll} building on work of Colliot-Th\'el\`ene and Sansuc, Harari and Skorobogatov. In particular, $C(\A_K)^{\Br}$ is the set of adelic points surviving all torsors $C' \to C$ arising as pullbacks of isogenies $J \to J$ (See Proposition~\ref{prop:BMandDescent}). So Theorem~\ref{Thm1} is a consequence of the following two theorems proven in Sections~\ref{sec:frob} and~\ref{sec:etale}, respectively.

\begin{Theorem}\label{Thm:FrobDescent}
	Let $C(\A_K)^{F^\infty}$ denote the set of adelic points surviving the $n$-th iterate of the $\F$-Frobenius isogeny $F:J \to J$ for all $n \ge 1$. Then $$C(\A_K)^{F^\infty} = C(K) \cup C(\A_{K,\F}).$$
\end{Theorem}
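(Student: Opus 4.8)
\emph{Strategy.} I would prove the two inclusions separately, the first being formal and the second containing all the content. Throughout write $\delta_{n,v}\colon J(K_v)\to \HH^1(K_v,J[F^n])$ and $\delta_n\colon J(K)\to\HH^1(K,J[F^n])$ for the connecting maps of $0\to J[F^n]\to J\xrightarrow{F^n} J\to 0$, noting that $C(\A_K)^{F^n}$ is the union over $\xi\in\HH^1(K,J[F^n])$ of the sets of $(P_v)$ with $\Res_v(\xi)=\delta_{n,v}(P_v)$ for all $v$.

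\emph{The inclusion $C(K)\cup C(\A_{K,\F})\subseteq C(\A_K)^{F^\infty}$.} That $C(K)\subseteq C(\A_K)^{F^\infty}$ is the usual fact that a rational point survives every descent (lift $P\in C(K)$ to the twist of $C\times_{J,F^n}J$ by $\delta_n(P)$). For the rest, let $(P_v)\in\prod_v C(\F_v)$. At each $v$ we have $P_v\in C(\F_v)\subseteq J(\F_v)$, and since $J$ and $J[F^n]$ are defined over $\F$ the map $\delta_{n,v}$ restricted to $J(\F_v)$ factors through $\HH^1_{\textup{fppf}}(\F_v,J[F^n])$. But $J[F^n]$ is infinitesimal and $\F_v$ is finite, so $\HH^1_{\textup{fppf}}(\F_v,J[F^n])=0$ (dévissage to $\alpha_p$ and $\mu_p$, using that $q$-power Frobenius is bijective on $\F_v$ and that $\F_v^\times$ has order prime to $p$); equivalently, $F^n$ is bijective on $J(\F_v)$. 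Hence $\delta_{n,v}(P_v)=0$ for all $v,n$, so $(P_v)$ lifts adelically to the untwisted torsor $C\times_{J,F^n}J$ — an honest adelic point since $C\times_{J,F^n}J$ is a scheme over $\F$ and so has everywhere good reduction — and thus $(P_v)\in C(\A_K)^{F^n}$ for every $n$.

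\emph{The inclusion $C(\A_K)^{F^\infty}\subseteq C(K)\cup C(\A_{K,\F})$.} Fix $(P_v)\in C(\A_K)^{F^\infty}$ which is not reduced; the goal is $(P_v)\in C(K)$. The local picture: since $F$ is bijective on $J(\F_v)$, the sequence $0\to \hat J(\fm_v)\to J(\calO_v)\to J(\F_v)\to 0$ is split by the $F$-divisible (Teichm\"uller) points, $J(\calO_v)\isom \hat J(\fm_v)\oplus J(\F_v)^{\textup{Teich}}$, with $F$ bijective on the second summand and raising $\fm_v$-adic order on the first; hence $J(K_v)/F^nJ(K_v)\isom \hat J(\fm_v)/F^n\hat J(\fm_v)$ and $\bigcap_n F^nJ(K_v)=J(\F_v)^{\textup{Teich}}$. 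Write $P_v=Q_v+\widetilde P_v$ in this decomposition; then $(P_v)$ is reduced exactly when all $Q_v=0$, so by hypothesis some $Q_{v_0}\neq 0$. Surviving $F^n$ gives $\xi_n\in\HH^1(K,J[F^n])$ with $\Res_v(\xi_n)=\delta_{n,v}(P_v)$ for all $v$; since $\delta_{n,v}(P_v)$ dies in $\HH^1(K_v,J)$, the image of $\xi_n$ in $\HH^1(K,J)$ lies in $\Sha^1(J)[F^n]\subseteq\Sha^1(J)[p^\infty]$, a finite group by Tate--Milne. Absorbing this bounded ambiguity, and likewise the bounded ambiguity in the local--global principle for $F^n$-divisibility (governed by the finite groups $\Sha^1(J[F^n])$), we may take $\xi_n=\delta_n(R_n)$ with $R_n\in J(K)$, so that $R_n\equiv P_v\pmod{F^nJ(\calO_v)}$ for all $v$ and $R_{n+1}-R_n\in F^nJ(K)$. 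Thus $(R_n)$ is Cauchy for the $F$-adic topology on the finitely generated group $J(K)$, and the formal parts satisfy $\rho_v(R_n)\equiv Q_v\pmod{F^n\hat J(\fm_v)}$ for all $v,n$.

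\emph{The main obstacle} is to show this Cauchy sequence stabilizes at a genuine $R\in J(K)$ rather than merely converging in the $F$-adic completion. I expect to argue as follows: $F$ acts on $J(K)/J(K)_{\textup{tors}}\isom \Hom_\F(\Alb_D,J)/(\textup{tors})$ (post-composition with the Frobenius endomorphism of $J$) with all eigenvalues of absolute value $\sqrt q$, so a nonzero element of $F^nJ(K)$ has canonical height growing like $q^n$; hence either $R_n$ is eventually constant or $\hat h(R_n)\to\infty$. The second alternative must be excluded using that $\rho_{v_0}(R_n)\to Q_{v_0}\neq 0$ in the compact, Hausdorff, $F$-adically complete formal group $\hat J(\fm_{v_0})$ — an unbounded sequence of rational points localizing into a fixed residue disc at $v_0$ cannot have formal parts converging to a fixed nonzero limit — and making this precise (via a valuation/degree estimate on the image curves $R_n(D)\subseteq J$, or an equidistribution input) is the technical heart. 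Granting it, $R_n\to R\in J(K)$ with $\rho_v(R)=Q_v$ for all $v$, i.e.\ $P_v-R_v\in J(\F_v)^{\textup{Teich}}$ for all $v$; matching formal parts everywhere — using that the reduction of the translated curve $C-R$ at each $v$ is again a constant curve — forces the constant corrections $P_v-R_v$ to vanish, so $(P_v)$ is the adelic image of $R$. Since the generic point of $D$ maps into $C$, the morphism $R\colon D\to J$ factors through $C\hookrightarrow J$, i.e.\ $R\in C(K)$, completing the proof.
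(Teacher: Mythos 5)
Your easy inclusion is essentially the same as the paper's (both rest on the fact that $F$, or $F^n$, is bijective on $\F_v$-points, so reduced adelic points lift trivially). But the hard inclusion $C(\A_K)^{F^\infty}\subseteq C(K)\cup C(\A_{K,\F})$ is not proved: you explicitly leave the ``technical heart'' unresolved, namely that your Cauchy sequence $(R_n)$ stabilizes at an honest $R\in J(K)$ rather than merely converging in an $F$-adic completion. The height heuristic you sketch is not a proof, and in fact your whole strategy of assembling a global point from compatible local $F^n$-divisibility data and then arguing it limits onto $C$ is qualitatively harder than the problem warrants. There is also a second unjustified step at the very end: even granting a limit $R\in J(K)$ with matching formal parts everywhere, ``the constant corrections $P_v-R_v$ vanish'' does not follow from $C-R$ being constant (it is not constant unless $R\in J(\F)$), and you offer no argument that $(P_v)-R$ must be trivial rather than some nonzero reduced adelic point on a translate of $C$.

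The idea you are missing is the one the paper's proof is built around. The $F$-Selmer map has a concrete differential description: there is a functorial injection $\mu_L:\HH^1(L,\ker F)\to\Omega^{\oplus g}_{L/\F}$, and the composite $J(L)\to J(L)/FJ^{(-1)}(L)\hookrightarrow\Omega_{L/\F}^{\oplus g}$ is $x\mapsto x^*(\omega_1,\dots,\omega_g)$ for a basis of holomorphic differentials (a restatement of a result of R\"ossler, going back to Artin--Milne). If $(x_v)$ survives $F$-descent with class $\delta_1(y)$ for $y\in J(K)$, then $x_v^*(\omega_i)=y^*(\omega_i)$ for all $v$; when these are not all zero, the ratios $(f_1(x_v):\cdots:f_g(x_v))=(f_1(y):\cdots:f_g(y))$ give a point of $\PP^{g-1}(K)$, and the canonical map $C\to\PP^{g-1}$ then pins $(x_v)$ down to a global point (with the hyperelliptic-fibre subtlety handled separately, and characteristic $2$ hyperelliptic via $F^2$ and Witt vectors). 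When instead $\mu_{K_v}(x_v)=0$ for all $v$, the vanishing of $\Sha^1(K,\ker F)$ (González-Avilés--Tan) --- a result you reinvent in handwavier form as ``absorbing bounded ambiguity'' --- shows $(x_v)$ lifts to the trivial twist $C^{(-1)}$, and one iterates; points that lift indefinitely are Frobenius-infinitely-divisible, hence reduced. Compared to your route, this avoids any global-points-from-local-data issue, any Mordell--Weil finiteness or height growth, and produces the dichotomy in a single step per Frobenius. You should replace your argument for the hard inclusion with this one; as written, your proposal does not establish the theorem.
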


\begin{Theorem}\label{Thm:EtDescent}
	Let $C(\A_K)^{\etaleisog}$ denote the set of adelic points surviving all torsors arising as pullbacks of \'etale isogenies $J' \to J$. Then
	$$C(\A_{K})^{\etaleisog} \cap C(\A_{K,\F}) = C^\textup{MW-Sieve}.$$
\end{Theorem}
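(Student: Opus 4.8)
The strategy is to reformulate each side in terms of Galois cohomology and then match them, using Lang's theorem to collapse all the local conditions to the finite-field level, and — for the harder inclusion — the finiteness of $\Sha(J/K)$ to absorb the discrepancy coming from $\Sha$.

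First I would record a structural fact. Since $J$ is constant and $\F$ is algebraically closed in $K$, the Galois action on the torsion of $J$ factors through $\Gal(\Fbar/\F)$, so every étale subgroup scheme of $J$ over $K$ descends to $\F$; passing to Cartier duals (and using that the character group of a multiplicative-type subgroup scheme of the constant abelian variety $J^{\vee}$ likewise carries an action through $\Gal(\Fbar/\F)$), every étale isogeny $\phi\colon J'\to J$ over $K$ is the base change of an étale isogeny $\phi_0\colon J_0'\to J_0$ over $\F$; in particular $J_0'$ is constant, $N:=\ker\phi$ is the base change of a finite étale $\F$-group scheme $N_0$, $\phi$ has good reduction everywhere, and — using the $\F$-rational embedding $C\hookrightarrow J$ — the torsor $C'=C\times_J J'\to C$ descends to an étale covering over $\F$. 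Let $\calN$ denote the lisse étale sheaf on $D$ attached to $N_0$. Combining the description of $C(\A_K)^{\etaleisog}$ from Proposition~\ref{prop:BMandDescent} (an adelic point survives all twists of $C'\to C$ exactly when its local images $\delta_v(P_v)\in\HH^1(K_v,N)$ lift to a single class in $\HH^1(K,N)$), the observation that for a reduced adelic point each $\delta_v(P_v)$ is unramified (as $P_v\in J(\F_v)$ and $\phi$ is everywhere unramified), and the standard identification of the everywhere-unramified subgroup of $\HH^1(K,N)$ with the finite group $\HH^1(D,\calN)$, I obtain
\[
 C(\A_K)^{\etaleisog}\cap C(\A_{K,\F})
 =\Big\{(P_v)\in\textstyle\prod_v C(\F_v)\ :\ (\delta_v P_v)_v\in\im\big(\HH^1(D,\calN)\to\textstyle\prod_v\HH^1(\F_v,N_0)\big)\ \text{for every }\phi_0\Big\},
\]
with $\delta_v\colon J(\F_v)\to\HH^1(\F_v,N_0)$ now the coboundary of $0\to N_0\to J_0'\to J_0\to0$. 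On the other side, good reduction everywhere gives $C^\textup{MW-Sieve}=\overline{\im(J(K)\to\prod_v J(\F_v))}\cap\prod_v C(\F_v)$, and — each $J(\F_v)$ being finite and discrete — lying in this closure means precisely that every finite truncation $(P_v)_{v\in S}$ lies in $\im(J(K)\to\prod_{v\in S}J(\F_v))$. With these reformulations the inclusion $C^\textup{MW-Sieve}\subseteq C(\A_K)^{\etaleisog}\cap C(\A_{K,\F})$ is quick: given $(P_v)$ in $C^\textup{MW-Sieve}$, an isogeny $\phi_0$, and a finite set $S$, choose $R\in J(K)$ agreeing with $(P_v)$ on $S$; then $\delta(R)\in\HH^1(K,N)$ is unramified everywhere, hence lies in $\HH^1(D,\calN)$, and its image in $\prod_v\HH^1(\F_v,N_0)$ matches $(\delta_v P_v)_v$ on $S$. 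Since $S$ is arbitrary and $\HH^1(D,\calN)$ is finite, its image in the profinite group $\prod_v\HH^1(\F_v,N_0)$ is closed, so $(\delta_v P_v)_v$ lies in that image and $(P_v)$ survives $\phi$.

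The reverse inclusion is the heart of the matter, and where I expect the main obstacle. The discrepancy is governed by $\Sha(J/K)$: writing $\mathcal J=J_0\times_\F D$ for the (proper) Néron model, the sequence $0\to J_0[n]\to\mathcal J\xrightarrow{n}\mathcal J\to0$ on $D$, together with Lang's theorem $\HH^1(\F_v,\mathcal J)=0$, yields $0\to J(K)/n\to\HH^1(D,J_0[n])\to\Sha(J/K)[n]\to0$ and $\HH^1(\F_v,J_0[n])\isom J(\F_v)/n$, so the image of $\HH^1(D,J_0[n])$ in $\prod_v J(\F_v)/n$ can properly contain that of $J(K)/n$, the quotient being a subquotient of the finite group $\Sha(J/K)[n]$; thus surviving the $[n]$-torsors alone does not force approximability by $J(K)$. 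The plan is to kill these discrepancies in the limit: since $\Sha(J/K)$ is finite, $\varprojlim_n\Sha(J/K)[n]=0$, so — after checking that the transition maps $\HH^1(D,J_0[nk])\to\HH^1(D,J_0[n])$ are compatible with reduction to $\prod_v J(\F_v)$ and with the projections onto $\Sha(J/K)[\,\cdot\,]$ — passing to the inverse limit over $n$ prime to $p$ should pin the prime-to-$p$ reduction of $(P_v)$ to $\varprojlim_n\im(J(K)/n\to\prod_v J(\F_v)/n)$, i.e.\ make it approximable away from $p$. For the $p$-part one uses the remaining étale isogenies: when $J_0$ has positive $p$-rank it admits, for each $a$, an étale isogeny $J_0\to J_0$ of $p$-power degree built from the étale part of $J_0[p^a]$ (quotient by the maximal étale subgroup scheme $J_0[p^a]^{\etale}$, followed by an isomorphism $J_0/J_0[p^a]^{\etale}\isom J_0$), and the same limiting argument — with $\Sha(J/K)[p^\infty]$, still finite, in place of $\Sha(J/K)[n]$ — handles the $p$-part; when $J_0$ has $p$-rank $0$ every $J(\F_v)$ already has trivial $p$-part, so nothing is needed there. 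Together the two inclusions give the claimed equality.

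The fiddliest and most error-prone step is this inverse-limit bookkeeping in the reverse inclusion: one must verify the exact compatibilities under which finiteness of $\Sha(J/K)$ forces the cumulative $\Sha$-discrepancy to vanish, uniformly across the prime-to-$p$ and $p$-primary parts. A cleaner but essentially equivalent route is to read the matching as an instance of Artin–Verdier/Poitou–Tate duality on $D$: the image of $\HH^1(D,\calN)$ in $\prod_v\HH^1(\F_v,N_0)$ is the annihilator of the image of $\HH^1(D,\calN^{\vee})$, and since the étale isogenies of $J_0$ and of its dual come in dual pairs, chasing the duality reproduces exactly the condition defining $C^\textup{MW-Sieve}$; the same $p$-primary subtlety recurs there, and the essential inputs in either approach are that étale isogenies of a constant $J$ descend to $\F$, that Lang's theorem trivializes $\HH^1(\F_v,J)$ so every local condition becomes the finite-field one, and that $\Sha(J/K)$ is finite.
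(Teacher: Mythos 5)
Your proposal takes a genuinely different and much more hands-on route than the paper's. The paper's proof of Theorem~\ref{Thm:EtDescent} is extremely short because it rides on three prior facts: (i)~Proposition~\ref{prop:BMandDescent} (the Stoll correspondence, extended to function fields in~\cite{CVV}) identifies $C(\A_K)^{\Br}$ with $C(\A_K)^{\isog}$; (ii)~Proposition~\ref{prop:etale} shows that on \emph{reduced} adelic points every isogeny torsor is equivalent to its \'etale part, because $\HH^1(\F_v,G_c)=0$ whenever $G_c$ is connected and $\F_v$ is perfect, which collapses $C(\A_{K,\F})^{\isog}$ to $C(\A_{K,\F})^{\etaleisog}$; and (iii)~the proof of~\cite[Proposition~2.12]{CVV} gives $C(\A_{K,\F})^{\Br}=C^{\textup{MW-Sieve}}$. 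You instead bypass the Brauer group entirely and try to match the two descent conditions directly in \'etale cohomology of $D$, which amounts to re-deriving a version of~\cite[Proposition~2.12]{CVV} from first principles. That is a legitimate strategy and captures the right ingredients (Lang's theorem to pass from $\HH^1(K_v,\cdot)$ to $\HH^1(\F_v,\cdot)$ for reduced points, the Selmer sequence $0\to J(K)/n\to\HH^1(D,J_0[n])\to\Sha(J/K)[n]\to0$, and finiteness of $\Sha$), but it is considerably longer and the heart of it---the reverse inclusion---is only sketched.

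Two concrete points you should shore up. First, your $p$-primary construction is not quite right as written: quotienting by $J_0[p^a]^{\etale}$ produces an isogeny $J_0\to J_0/J_0[p^a]^{\etale}$ whose kernel is \'etale but which points in the wrong direction, and the asserted isomorphism $J_0/J_0[p^a]^{\etale}\cong J_0$ over $\F$ is not automatic. What you actually want is the complementary isogeny: split $J_0[p^a]=J_0[p^a]^{\etale}\times J_0[p^a]^{\conn}$ (valid over the perfect field $\F$), set $J'=J_0/J_0[p^a]^{\conn}$, and observe that the isogeny $J'\to J_0$ completing multiplication by $p^a$ has kernel $\cong J_0[p^a]^{\etale}$, hence is \'etale with target $J_0$. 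Second, and more seriously, the inverse-limit bookkeeping you flag as ``fiddliest and most error-prone'' is genuinely the crux: surviving each $[n]$-descent places $(\delta_v P_v)_v$ in the image of $\HH^1(D,J_0[n])$, but to land in $\overline{J(K)}$ you must show that the $\Sha(J/K)[n]$-ambiguities are \emph{compatibly} killed after passing to the limit, which requires checking that the transition maps on the $\HH^1(D,J_0[n])$ respect both the reduction to $\prod_v J(\F_v)/n$ and the surjections onto $\Sha[n]$, and handling the prime-to-$p$ and $p$-primary pieces on equal footing. You acknowledge you have not carried this out. Until that is done, your argument is an outline rather than a proof; by contrast the paper simply cites~\cite{CVV} for exactly this, and uses Proposition~\ref{prop:etale} to sidestep the need to restrict attention to \'etale isogenies at all.
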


This result remains true if $C(\A_K)^{\etaleisog}$ is replaced by the a priori smaller set of adelic points surviving all torsors under abelian group schemes over $K$ (see Theorem \ref{thm:BrAKF}). Propositions~\ref{prop:example1} and~\ref{prop:example2} show that in general non-\'etale torsors are required to cut out the set of rational points (even if one includes non-abelian group schemes). 

In Corollary~\ref{cor:abdescC} we give a characterization of the elements of $C^\textup{MW-Sieve}$ in terms of maps between the sets $D(\Fbar) \to C(\Fbar)$ which induce homomorphisms of their Jacobians. We are thus lead to ask if all such maps arise from a global point. 

\begin{Question}
	Suppose $\psi : D(\Fbar) \to C(\Fbar)$ is a Galois-equivariant map of sets which, when extended linearly to divisors, sends principal divisors to principal divisors. Is there a morphism of curves $\phi:D \to C$ such that $\psi$ is given by composing $\phi$ with a limit of Frobenius maps?
\end{Question}

An affirmative answer to this question implies that all four statements in~\eqref{conjectures} hold in the case of constant curves over global function fields. In Section~\ref{sec:maps} we show that the answer to this question is affirmative when the genus of $D$ is less than the genus of $C$.

\begin{Theorem}\label{thm:Brgenus}
	If the genus of $D$ is less than the genus of $C$, then $C(\A_K)^{\Br} = C(K) = C(\F)$.
\end{Theorem}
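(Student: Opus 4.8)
The plan is to combine Theorem~\ref{Thm1} with an affirmative answer to the Question in the range $g_D < g_C$; the substance lies in the latter, which is the business of Section~\ref{sec:maps}. I first record the easy input. A $K$-rational point of $C\otimes_\F K$ is a morphism $\Spec K \to C$ over $\F$, which, since $C$ is proper and $D$ is a smooth curve, extends to a morphism $\phi\colon D \to C$ over $\F$; if $\phi$ were non-constant, then applying Riemann--Hurwitz to its maximal separable subcover would give $2g_D - 2 \ge 2g_C - 2$ (using $g_C \ge 1$, which holds because $g_D < g_C$), contradicting $g_D < g_C$. Hence $C(K) = C(\F)$. Since $C$ has everywhere good reduction these constant points also lie in $C^\textup{MW-Sieve}$, so by Theorem~\ref{Thm1} it remains to prove $C^\textup{MW-Sieve}\subseteq C(\F)$.

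By Corollary~\ref{cor:abdescC} each element of $C^\textup{MW-Sieve}$ is described by a Galois-equivariant map of sets $\psi\colon D(\Fbar)\to C(\Fbar)$ which, extended $\Z$-linearly to divisors, preserves degree and carries principal divisors to principal divisors; I must show every such $\psi$ is constant. Passing to linear equivalence, $\psi$ induces a Galois-equivariant homomorphism $\lambda\colon J_D(\Fbar)\to J_C(\Fbar)$ on the $\Fbar$-points of the Jacobians $J_D = \Jac(D)$, $J_C = \Jac(C)$; and because $\psi(P)\in C(\Fbar)$ is exactly an effective divisor of degree one on $C$, the map $\psi$ is the restriction, along the Abel--Jacobi embeddings, of the $\lambda$-equivariant map of torsors $\bar\psi\colon \Pic^1(D_{\Fbar})\to\Pic^1(C_{\Fbar})$ determined by $\lambda$ and the image of a single point.

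The crux, and the step I expect to be the main obstacle, is to show that $\psi$ is a limit of Frobenius maps composed with a genuine morphism $\phi\colon D\to C$. For this I would invoke Tate's theorem on homomorphisms of abelian varieties over a finite field at all primes (at $p$ via Dieudonn\'e theory / $p$-divisible groups), which identifies $\Hom_{\Gal(\Fbar/\F)}\!\big(J_D(\Fbar),J_C(\Fbar)\big)$ with $\Hom(J_D,J_C)\otimes_\Z\widehat\Z$. The constraint that $\bar\psi$ carries the Abel--Jacobi image of $D$ into that of $C$ --- a condition far from preserved under the primary decomposition of $J_D(\Fbar)$ and $J_C(\Fbar)$ --- should force the a priori ``$\ell$-by-$\ell$ mixed'' homomorphism $\lambda$ to be a genuine homomorphism $\hat\lambda\in\Hom(J_D,J_C)$ twisted by a limit of Frobenius endomorphisms; pinning the torsor map down over $\F$ by a choice of degree-one divisor class on $D$ rational over $\F$ (which exists, $\F$ being finite) then realizes $\psi = \sigma\circ\phi$ with $\phi\colon D\to C$ a morphism over $\F$ and $\sigma$ a limit of Frobenius maps. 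Excluding the mixed homomorphisms cleanly, using only the curve-into-Jacobian constraint, is the delicate point; an alternative is to work with the Zariski closure $\Gamma$ of the graph of $\psi$ in $D\times C$, where the degree-and-principality hypothesis should force $\Gamma$ to be a correspondence radicial over $D$, hence a Frobenius twist of a morphism.

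Finally the genus hypothesis eliminates $\phi$. The image $\hat\lambda(J_D)$ is an abelian subvariety of $J_C$ of dimension at most $g_D < g_C$, hence proper; since the Abel--Jacobi image of $C$ generates $J_C$ while that of $D$ is carried by $\hat\lambda$ into $\hat\lambda(J_D)$, a non-constant $\phi$ would force the irreducible curve $\Pic^1$-image of $C$ to lie inside the proper abelian subvariety $\hat\lambda(J_D)$, contradicting that it generates $J_C$. So $\phi$ is constant with value some $Q_0\in C(\F)$, and then $\psi = \sigma\circ\phi$ is the constant map to $Q_0$, as $\sigma$ fixes $\F$-rational points. Hence every element of $C^\textup{MW-Sieve}$ is the reduction of a point of $C(\F)$, giving $C^\textup{MW-Sieve}\subseteq C(\F)$; together with Theorem~\ref{Thm1} and the first paragraph, $C(\A_K)^{\Br} = C(K)\cup C^\textup{MW-Sieve} = C(\F) = C(K)$.
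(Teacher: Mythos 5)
Your opening paragraph (the Riemann--Hurwitz argument for $C(K)=C(\F)$ and the reduction via Theorem~\ref{Thm1} and Corollary~\ref{cor:abdescC} to showing every admissible $\psi\colon D(\Fbar)\to C(\Fbar)$ is constant) is correct and matches the paper's setup. But the rest of the argument has a genuine gap exactly where you flag it: you need to upgrade the group-homomorphism $\lambda\in\Hom_{G_\F}(J_D(\Fbar),J(\Fbar))$ to a genuine morphism $\phi\colon D\to C$ composed with a limit of Frobenius, and you candidly say you do not know how to ``exclude the mixed homomorphisms.'' That step is essentially the paper's open Question in this case, and it is not a footnote --- it is a statement of Bogomolov--Korotiaev--Tschinkel / Zilber type which the paper points out is hard (and which Zilber proves only under additional hypotheses). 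Tate's isogeny theorem only tells you $\lambda$ lies in $\Hom(J_D,J)\otimes\widehat{\Z}$, not in $\Hom(J_D,J)$; the curve-into-Jacobian constraint does not obviously rigidify a $\widehat{\Z}$-combination of honest homomorphisms into an honest one, and neither your Tate-module route nor your Zariski-closure-of-the-graph alternative is carried through. Your final dimension count also presupposes $\phi$ is a genuine morphism (so that $\phi(D)=C$ for non-constant $\phi$), so it cannot be salvaged without that step.

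The paper avoids this entirely. It never attempts to realize $\psi$ as a morphism. Instead it bounds the image directly: with $g=g_D$, every point of $J_D(\Fbar)$ is a sum of $g$ points of $D$ (Riemann--Roch), so $I:=\psi(J_D(\Fbar))$ lies in $W^{g}(C)$, the image of $\Sym^g C$ in $J$. Being a subgroup, $I$ has Zariski closure $\overline{I}$ an algebraic subgroup of $J$ contained in $W^g(C)$, of dimension $\le g_D<g_C$; hence $C\cap\overline{I}$ is finite, and since $\psi(D(\Fbar))$ lands in this finite set, Corollary~\ref{cor:infiniteimage} forces $\psi$ to be constant. This is a short dimension-count on $J$ using only the abelian-variety structure, sidestepping Tate's theorem and the whole ``recognize $\psi$ as a morphism'' problem that blocks your proposal.
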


	\section{Notation and preliminaries}
	
		As in the introduction $C$ will denote a constant curve over the global function field $K = \F(D)$ and $J$ is the Jacobian of $C$. The places of $K$ are in bijection with the set $D^1$ of closed points of $D$. Given a closed point $v \in D^1$ we use $K_v$, $\mathcal{O}_v$ and $\F_v$ to denote the corresponding completion, ring of integers and residue field, respectively. Let us fix once and for all an algebraic closure $\Fbar$ of $\F$ and for each place $v$, an embedding $\F \subset \F_v \subset \Fbar$. The embedding determines a geometric point $\overline{v} \in D(\Fbar)$ in the support of $v$. Let $G_\F = \Gal(\Fbar/\F)$ be the absolute Galois group of $\F$.
	
	Throughout the paper $X$ denotes a proper geometrically integral variety over $\F$ and $X_K := X \otimes_\F K$. Unless otherwise specified, all cohomology is flat cohomology.
	
	\subsection{Adelic points}
	The \defi{adele ring of $K$} is the $K$-algebra $\A_K = \prod_{v \in D^1} (K_v:\mathcal{O}_v)$, where the restricted product runs over the closed points of $D$. For any place $v$ of $K$, the inclusions $\F \subset \F_v \subset \mathcal{O}_v \subset K_v$ endow $\mathcal{O}_v, K_v$ and $\A_K$ with the structure of $\F$-algebra. Consequently the sets
\begin{align*}
	X(K_v) &:= \Mor_{\Spec(\F)}(\Spec(K_v),X)\,,\\
	X(\A_K) &:= \Mor_{\Spec(\F)}(\Spec(\A_K),X)
\end{align*}
are well defined. The universal property of fibered products gives canonical bijections of these sets with $X_{K}(K_v)$ and $X_K(\A_K)$, respectively. Since $X$ is proper we may identify $X(\A_K) = X_K(\A_K) = \prod_{v \in D^1}X(K_v) = \prod_{v\in D^1}X(\mathcal{O}_v)$.

\subsection{Reduced adelic points}\label{sec:redadeles}
The \defi{reduced adele ring of $K$} is the $\F$-algebra $\A_{K,\F} = \prod_{v\in D^1} \F_v$. This is an $\F$-subalgebra of $\A_K$. The set $X(\A_{K,\F}) = \Mor_{\Spec(\F)}(\Spec(\A_{K,\F}),X)$ of reduced adelic points on $X$ is a closed subset of $X(\A_{K})$ which can be identified with $\prod_{v\in D^1} X(\F_v)$, where the latter is endowed with the product of the discrete topologies. This agrees with the subspace topology determined by the inclusion $X(\A_{K,\F}) \subset X(\A_{K})$. The quotient of $\mathcal{O}_v$ by its maximal ideal induces the reduction map $r_v : X(K_v) = X(\mathcal{O}_v) \to X(\F_v)$. These give rise to a continuous projection $r : X(\A_K) \to X(\A_{K,\F})$ sending $(x_v)$ to $(r_v(x_v))$. 

\begin{Lemma}\label{lem:maps}
	A reduced adelic point $(x_v) \in X(\A_{K,\F})$ determines a unique map of $G_\F$-sets $\psi : D(\Fbar) \to X(\Fbar)$ with the property that $\psi(\overline{v}) = x_v$. This induces a bijection $X(\A_{K,\F}) \leftrightarrow \Map_{G_\F}(D(\Fbar),X(\Fbar))$. 
\end{Lemma}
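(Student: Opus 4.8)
The plan is to reduce the statement to the $G_\F$-set structure of $D(\Fbar)$ together with Galois descent for rational points. First I would record two elementary facts. (1) Since $D$ is of finite type over $\F$ and $\Fbar/\F$ is algebraic, every geometric point of $D$ has image a closed point, so $D(\Fbar)$ decomposes as the disjoint union over $v \in D^1$ of the set $D_v(\Fbar)$ of geometric points lying over $v$; moreover $G_\F$ acts on $D_v(\Fbar) = \Hom_\F(\F_v,\Fbar)$ transitively, and the stabilizer of the distinguished point $\vbar$ (i.e.\ of the fixed embedding $\F_v \subset \Fbar$) is $\Gal(\Fbar/\F_v) \subseteq G_\F$, so $D_v(\Fbar) \cong G_\F/\Gal(\Fbar/\F_v)$ as $G_\F$-sets with $\vbar$ corresponding to the trivial coset. (2) For any scheme $Y$ of finite type over $\F$ and any closed subgroup $H \leq G_\F$ with fixed field $L = \Fbar^H$, the natural map $Y(L) \to Y(\Fbar)^H$ is a bijection; applied with $H = \Gal(\Fbar/\F_v)$ this gives $X(\F_v) = X(\Fbar)^{\Gal(\Fbar/\F_v)}$.

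Granting these, I would construct the bijection directly. Given $(x_v) \in X(\A_{K,\F}) = \prod_v X(\F_v)$, I view each $x_v$ as an element of $X(\Fbar)$ fixed by $\Gal(\Fbar/\F_v)$ via (2), and define $\psi \colon D(\Fbar) \to X(\Fbar)$ on the orbit $D_v(\Fbar)$ by $\psi(\sigma \cdot \vbar) := \sigma \cdot x_v$. This is well defined: if $\sigma \vbar = \tau \vbar$ then $\tau^{-1}\sigma \in \Gal(\Fbar/\F_v)$ fixes $x_v$, whence $\sigma x_v = \tau x_v$. By construction $\psi$ is $G_\F$-equivariant and $\psi(\vbar) = x_v$, and it is the only $G_\F$-equivariant map with the latter property, because the points $\vbar$ form a set of orbit representatives for $D(\Fbar)$ by (1), so equivariance forces the value of $\psi$ at every point. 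This defines a map $X(\A_{K,\F}) \to \Map_{G_\F}(D(\Fbar),X(\Fbar))$.

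For the inverse, given $\psi \in \Map_{G_\F}(D(\Fbar),X(\Fbar))$, set $x_v := \psi(\vbar)$; since $\Gal(\Fbar/\F_v)$ fixes $\vbar$ and $\psi$ is equivariant, $x_v$ is fixed by $\Gal(\Fbar/\F_v)$, hence lies in $X(\F_v)$ by (2), so $(x_v) \in X(\A_{K,\F})$. The two constructions are visibly mutually inverse: one composite returns $(\psi(\vbar))_v$, and the other returns a $G_\F$-equivariant map agreeing with $\psi$ on the orbit representatives $\vbar$, hence equal to $\psi$.

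There is no serious obstacle here; the statement is essentially a repackaging of the chain of identifications $\Map_{G_\F}\bigl(\coprod_v G_\F/\Gal(\Fbar/\F_v),\, X(\Fbar)\bigr) = \prod_v X(\Fbar)^{\Gal(\Fbar/\F_v)} = \prod_v X(\F_v) = X(\A_{K,\F})$. The only points requiring a moment's care are the identification in (1) of the $G_\F$-orbits on $D(\Fbar)$ together with their stabilizers, which is standard for a variety over a finite field, and the Galois descent statement (2), which I would simply cite. One should also note that ``unique'' in the lemma is to be read among $G_\F$-equivariant maps: when $v$ has degree $>1$ there are of course many non-equivariant set maps with $\psi(\vbar) = x_v$, and the equivariance appearing in the asserted bijection is exactly what makes $\psi$ unique.
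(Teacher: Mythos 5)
Your proof is correct and follows essentially the same route as the paper: decompose $D(\Fbar)$ into $G_\F$-orbits indexed by closed points with representative $\vbar$, use Galois descent to identify $X(\F_v)$ with $\Gal(\Fbar/\F_v)$-invariants of $X(\Fbar)$, and read off the bijection. The paper's own argument is a terse one-sentence version of exactly this; you have simply made the orbit-stabilizer identification and the descent step explicit.
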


\begin{proof}
	Since $D(\Fbar)$ is the union, as $v$ ranges over the closed points of $D$, of the $G_\F$-orbits of the points $\vbar$, it is clear that there is a unique $G_\F$-equivariant map with the stated property. Conversely, given a map of $G_\F$-sets $\psi : D(\Fbar) \to X(\Fbar)$, we define an adelic point $(x_v) \in X(\A_{K,\F})$ by $x_v = \psi(\vbar) \in X(\Fbar)$. Galois equivariance of the map shows that $x_v \in X(\F_v)$.
\end{proof}

	\subsection{Rational points}
		
	The universal property of fibered products and the valuative criterion for properness give identifications $X_K(K) = X(K) = \Mor_{\F}(\Spec(K),X) = \Mor_{\F}(D,X)$. Together with previous lemma we have a commutative diagram
\[
	\xymatrix{
		X(K) \ar[r]^r \ar@{=}[d] & X(\A_{K,\F}) \ar@{=}[d] \\
		\Mor_\F(D,X) \ar[r] & \Map_{G_\F}(D(\Fbar),X(\Fbar))
	}
\]
where the bottom map is the obvious one taking a morphism of varieties to the map it induces on geometric points. Since a morphism of varieties over a field with geometrically reduce source is determined by what it does to geometric points \cite[Exercise 5.16]{AGI}, the horizontal maps are injective.

\subsection{Brauer-Manin and abelian descent obstructions}

	Consider the category $\Cov(X_K)$ of $X_K$-torsors under finite group schemes over $K$ (see \cite[Section 4]{Stoll}).

	We say that an adelic point $P \in X(\A_{K}) = X_K(\A_K)$ survives $(X',G) \in \Cov(X_K)$ if the element of $\prod_v \HH^1(K_v,G)$ given by evaluating $(X',G)$ at $P$ lies in the image of the diagonal map $\HH^1(K,G) \to \prod_v\HH^1(K_v,G)$. Equivalently $P$ survives $(X',G)$ if and only if $P$ lifts to an adelic point on some twist of $(X',G)$. The set of adelic points surviving a set of torsors is a subset of $X(\A_K)$ containing $K$. Let $X(\A_K)^{\fab}$ denote the set of adelic points surviving all $(X',G) \in \Cov(X_K)$ for which $G$ is a finite abelian group scheme over $K$.
	
	When $X$ is a subvariety of an abelian variety $A/\F$ we shall consider the subset of torsors in $\Cov(X_K)$ which arise as pullbacks of (\'etale) isogenies $\phi:A' \to A$ defined over $\F$. We note that these are geometrically connected torsors under finite abelian group schemes over $\F$. They depend on the embedding $X \to A$, but only up to twist by elements of $\HH^1(\F,\ker(\phi))$. As such the sets $X(\A_K)^{\isog}$ and $X(\A_K)^{\etaleisog}$ of adelic points surviving all such torsors do not depend on the embedding.
		
	\begin{Proposition}\label{prop:BMandDescent}
		Suppose $X/\F$ is either a curve or an abelian variety. Then $X(\A_K)^{\Br} = X(\A_K)^{\fab} = X(\A_K)^{\isog}$. If $X$ is an abelian variety, then $X(\A_K)^{\Br} = \overline{X(K)}$.
	\end{Proposition}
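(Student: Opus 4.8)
The plan is to treat the three equalities $X(\A_K)^{\Br} = X(\A_K)^{\fab} = X(\A_K)^{\isog}$ and the identification with $\overline{X(K)}$ in the abelian case separately, drawing on the established machinery of Stoll and of Harari--Skorobogatov. The inclusions $X(\A_K)^{\Br} \supseteq X(\A_K)^{\fab} \supseteq X(\A_K)^{\isog}$ are essentially formal: enlarging the class of torsors one tests against shrinks the surviving set, abelian torsors are a subclass of all finite torsors, and pullbacks of isogenies are a subclass of abelian torsors. So the content is in the reverse inclusions. First I would recall that for a smooth geometrically integral variety the descent set $X(\A_K)^{\fab}$ agrees with $X(\A_K)^{\Br_1}$, the points orthogonal to the \emph{algebraic} Brauer group, by the Colliot-Th\'el\`ene--Sansuc / Harari--Skorobogatov comparison of abelian descent and algebraic Brauer classes; when $X$ is a curve or an abelian variety over a field with no nontrivial unramified transcendental classes available in the relevant sense, one argues (as in \cite[Section 6--8]{Stoll}) that $\Br X / \Br K$ is controlled by $\Br_1$, giving $X(\A_K)^{\Br} = X(\A_K)^{\Br_1} = X(\A_K)^{\fab}$.

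For $X(\A_K)^{\fab} = X(\A_K)^{\isog}$ when $X$ is an abelian variety $A$: every finite abelian \'etale torsor under $A_K$ is, up to twist, a pullback of an isogeny $A' \to A$ (this is the standard description of torsors under $A$ via the Kummer sequence and the fact that $\HH^1(K, A[n])$ torsors are pulled back from multiplication-by-$n$ and its twists), so testing against all finite abelian torsors is the same as testing against pullbacks of isogenies. For $X$ a curve $C$ embedded in its Jacobian $J$, I would use that every torsor under a finite abelian group scheme over $C$ is the restriction to $C$ of a torsor under an abelian group scheme over $J$ — concretely, $\HH^1_{\textup{fppf}}(C, G) $ for $G$ finite abelian receives everything from $\HH^1(J, G)$ because $C \hookrightarrow J$ induces an isomorphism on $\HH^1$ with finite abelian coefficients (the Albanese/Jacobi property, $C$ having a rational $0$-cycle of degree $1$), and the latter are governed by isogenies of $J$ as in the abelian case. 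This reduces the curve case to the abelian variety case.

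Finally, for the statement $A(\A_K)^{\Br} = \overline{A(K)}$ when $A$ is an abelian variety: this is the function-field analogue of the theorem of Manin, refined by Wang and in this setting requiring finiteness of $\Sha(A)$, which holds here by the cited result of Milne. The argument is the classical one: the descent set $A(\A_K)^{\isog}$ equals the intersection over all $n$ of the images of $A(\A_K)/n \to \prod_v A(K_v)/n$ pulled back appropriately, and Poitou--Tate duality together with finiteness of $\Sha(A)$ forces this intersection to be exactly the closure of the image of $A(K) \to A(\A_K)$ in the adelic topology. I expect the main obstacle to be the first equality $X(\A_K)^{\Br} = X(\A_K)^{\fab}$: one must verify that no \emph{transcendental} Brauer classes contribute, i.e.\ that $\Br X$ modulo the image of $\Br K$ coincides with the algebraic part $\Br_1 X / \Br K$ in a way compatible with evaluation on adelic points. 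For curves this is handled by $\Br \Cbar = 0$ together with a Hochschild--Serre spectral sequence argument, and for abelian varieties by the known structure of $\Br$ of an abelian variety over a field of cohomological dimension $\le 2$; making these standard inputs precise and citing them correctly (Stoll, Harari--Skorobogatov) is where the care is needed, but no genuinely new argument is required.
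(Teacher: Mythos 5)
The paper's proof of this proposition is purely a citation: the first statement is attributed to Stoll \cite[Section~7]{Stoll} (number field case) extended to function fields via \cite[Section~2]{CVV}, and the second to \cite[Remark~4.5]{PoonenVoloch}. Your proposal is an attempt to unwind the content behind those citations, which is a legitimate thing to do, but the framing contains a genuine error that undermines the logical structure.

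You write that ``the inclusions $X(\A_K)^{\Br} \supseteq X(\A_K)^{\fab} \supseteq X(\A_K)^{\isog}$ are essentially formal'' and that ``the content is in the reverse inclusions.'' This is backwards, and moreover contradicts the heuristic you supply in the same sentence. Your own principle --- enlarging the class of torsors one tests against shrinks the surviving set --- applied to the chains you describe (pullbacks of isogenies $\subset$ finite abelian torsors $\subset$ finite torsors) gives
\[
X(\A_K)^{\Br} \subseteq X(\A_K)^{\fab} \subseteq X(\A_K)^{\isog},
\]
with the second inclusion genuinely formal and the first following from the Colliot-Th\'el\`ene--Sansuc / Harari--Skorobogatov identification of $X(\A_K)^{\fab}$ with the algebraic Brauer set $X(\A_K)^{\Br_1}\supseteq X(\A_K)^{\Br}$. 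The hard direction, the one that consumes the Cassels--Tate machinery, Poitou--Tate duality, and finiteness of $\Sha(J)$, is the \emph{closing} inclusion $X(\A_K)^{\isog} \subseteq X(\A_K)^{\Br}$: one must show that orthogonality to the few Brauer classes arising from isogeny pullbacks already forces orthogonality to the entire Brauer group. As written, your plan locates the content in the easy inclusions and asserts the hard one as formal, which would leave the actual proof undone.

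Beyond that reversal, the ingredients you invoke are the right ones and correspond to what Stoll, CVV, and Poonen--Voloch actually do: use of $\Br \overline{C} = 0$ for the curve case, the relation $\HH^1(C,G) \cong \HH^1(J,G)$ for $G$ finite abelian via $C\hookrightarrow J$, and the Manin-style argument for abelian varieties using finiteness of $\Sha$ (here supplied by Milne's theorem). A caution for the abelian-variety case: rather than assert that transcendental Brauer classes can be ignored because ``$\Br X/\Br K$ is controlled by $\Br_1$,'' it is cleaner to run the argument through the Cassels--Tate pairing directly, since the Brauer classes that show up there are already algebraic and the conclusion $A(\A_K)^{\Br} = \overline{A(K)}$ pins down the full Brauer set without needing a separate comparison of $\Br$ and $\Br_1$. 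If you correct the inclusion directions and route the hard inclusion through that pairing, the outline matches the cited literature.
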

	
	\begin{proof}
		Stoll proved the number field analogue of the first statement \cite[Section 7]{Stoll}. For the extension to global function fields see \cite[Section 2]{CVV}. For the second statement see \cite[Remark 4.5]{PoonenVoloch}. Note that this uses \cite{GAT}, which in turn uses crucially \cite[Th III.8.2]{MilneADT} a complete proof of which can be found in \cite{DH}.
	\end{proof}

\section{Frobenius descent obstruction}\label{sec:frob}

	Let $J$ be the Jacobian of $C$ and fix an embedding $C \to J$. We will consider the relative Frobenius morphism $F:J^{(-1)} \to J$ constructed as follows. Let $J^{(-1)} \to \Spec\F$ be the pullback of $J \to \Spec \F$ by the map on $\Spec \F$ induced by raising elements of $\F$ to $p^{(n-1)}$, where $|\F| = p^n$. Then $J$ is the pullback of $J^{(-1)} \to \Spec \F$ by the $p$th power map on $\Spec \F$. The universal property of the fibered product gives a morphism $\F$-schemes $F: J^{(-1)} \to J$. Zariski locally, defining equations for $J$ are obtained from those defining $J^{(-1)}$ by taking $p$-th powers and $F$ is given by raising coordinates to their $p$-th powers. 

	The pullback of $F$ along the embedding $C \to J$ yields a torsor $(C',\ker(F)) \in \Cov(C_K)$ under the finite abelian $K$-group scheme $\ker(F) \subset J^{(-1)}$ (which is strictly speaking the base change of $\ker(F)$ to $K$). The torsor $(C',\ker(F))$ does not depend on the choice of embedding because $F: J^{(-1)}(\F) \to J(\F)$ is surjective. We note that $C'$ is not reduced; the induced morphism $C'_\textup{red} \to C$ on the reduced subscheme of $C'$ is the $\F_p$-Frobenius morphism $C^{(-1)} \to C$ which has degree $p$, while $C' \to C$ has degree $p^{g}$.

	For a separable extension $L/K$ the Kummer sequence associated to $F$ is an exact sequence of flat cohomology groups
	\[
		0 \to J(L)/F(J^{(-1)}(L)) \stackrel{\delta_{F,L}} \To \HH^1(L,\ker F) \To \HH^1(L,J)[F] \to 0\,.
	\]
	The connecting morphism $\delta_{F,L}$ has the following explicit description.
	
\begin{Lemma}\label{lem:deltaFL}
There is a canonical injective group homomorphism $\Phi_L: H^1(L,\ker F) \to \Hom(\Omega^1_{J/L},\Omega_{L/\F})$ which is functorial on separable extensions $L/K$ and such that the composition $\Phi_L \circ \delta_{F,L}$ sends $x \in J(L)$ to the map $\left( \omega \mapsto x^*\omega\right)$.
\end{Lemma}

The existence of the map is a slight restatement of \cite[Proposition 1.1]{Rossler2} (which is, in turn, a variant of \cite[\S2]{AM}) and the explicit expression follows from the proof there. See also \cite[pg 123-124]{BV} for a proof in the ordinary case.

\begin{Lemma}
\label{lem:Frob}
	If $(x_v) \in C(\A_K)$ survives $(C',\ker(F))$ and $\delta_{F,K_v}(x_v) \ne 0$ for some $v$, then $(x_v) \in C(K)$ unless $p = 2$ and $C$ is hyperelliptic, in which case a similar result holds with $F$ replaced by $F^2$.
\end{Lemma}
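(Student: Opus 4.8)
The plan is to feed the hypothesis into the preceding Lemma to replace ``$(x_v)$ survives $(C',\ker F)$'' by a statement about holomorphic differentials on $D$, and then to reconstruct a morphism $D\to C$ from the canonical map. Let $g=\dim J$; we may assume $g\ge 2$ ($g=0$ is vacuous since $\ker F=0$; $g=1$, where $C=J$, is treated directly using finiteness of $\Sha(J)$). Write $C$ also for the curve over $\F$, fix a basis $\omega_1,\dots,\omega_g$ of holomorphic differentials of $C$ and set $\vec\omega=(\omega_1,\dots,\omega_g)$, and let $\kappa\colon C\dashrightarrow\PP^{g-1}$ be the canonical map. First: the classes of the $x_v$ in $H^1(K_v,\ker F)$ (the evaluation of the torsor $(C',\ker F)$ at $(x_v)$) come from $H^1(K,\ker F)$, so applying the functoriality of $\mu$ and its explicit form on $J(L)/F(J^{(-1)}(L))$ produces $\vec\eta=(\eta_1,\dots,\eta_g)\in\Omega_{K/\F}^{\oplus g}$ with $x_v^*\omega_i=\eta_i|_{K_v}$ for all $i$ and all $v$. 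Since $C$ is proper each $x_v$ extends to an $\mathcal O_v$-point, so the $\eta_i$ are regular everywhere, i.e.\ holomorphic differentials on $D$. As $\mu_{K_v}(x_v)\ne 0$ for some $v$ we have $\vec\eta\ne 0$; reindex so $\eta_1\ne 0$. A nonzero rational differential on $D$ is nonzero in every completion, so $x_v^*\omega_1=\eta_1|_{K_v}\ne 0$ for all $v$; hence no $x_v$ is a constant point (those pull $\vec\omega$ back into $\Omega_{\F_v/\F}=0$), and each $x_v\colon\Spec K_v\to C$ dominates $C$, inducing a field embedding $x_v^*\colon\F(C)\hookrightarrow K_v$ over $\F$.

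Put $f_i=\omega_i/\omega_1\in\F(C)$ and $g_i=\eta_i/\eta_1\in K=\F(D)$; then $x_v^*f_i=g_i|_{K_v}$, so $\kappa\circ x_v$ is the value at $v$ of the rational map $\lambda=[1:g_2:\cdots:g_g]\colon D\dashrightarrow\PP^{g-1}$. Any homogeneous relation among $1,f_2,\dots,f_g$ holding in $\F(C)$ is carried by $x_v^*$ to the same relation among $1,g_2,\dots,g_g$ in $K_v$, hence (as $K\hookrightarrow K_v$) holds already in $K$; so $\lambda$ factors through the canonical image $\kappa(C)\subseteq\PP^{g-1}$. If $C$ is \emph{not} hyperelliptic then $g\ge 3$ and $\kappa$ is a closed immersion, so $\lambda=\kappa\circ\phi$ for a rational, hence (as $D$ is smooth and $C$ proper) regular, map $\phi\colon D\to C$; then $\kappa\circ\phi|_{K_v}=\kappa\circ x_v$ forces $\phi|_{K_v}=x_v$ for every $v$, so $(x_v)$ is the image of $\phi\in\Mor_\F(D,C)=C(K)$ and we are done.

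Suppose now $C$ is hyperelliptic and $p\ne 2$. Then $\kappa=\nu\circ h$ with $h\colon C\to\PP^1$ the hyperelliptic double cover (defined over $\F$ for $g\ge2$, the degree-$2$ pencil being unique) and $\nu$ the degree-$(g-1)$ Veronese, so $\lambda=\nu\circ\mu$ for a morphism $\mu\colon D\to\PP^1$ and $h\circ x_v=\mu|_{K_v}$ for all $v$. Thus each $x_v^*$ restricts on $\F(\PP^1)$ to the fixed embedding $\mu^*\colon\F(\PP^1)\hookrightarrow K\hookrightarrow K_v$, so the degree-$2$ \'etale $K$-algebra $L:=\F(C)\otimes_{\F(\PP^1)}K$ has a $K_v$-point for every $v$; were $L$ a field it would be a nontrivial separable quadratic extension of a global field and would have an inert place (in the constant-field case, a closed point of $D$ of odd degree, of which there is at least one), a contradiction, so $L\cong K\times K$. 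This yields two morphisms $\phi'$ and $\iota\phi'$ from $D$ to $C$ over $h$ ($\iota$ the hyperelliptic involution), and since $L\otimes_KK_v\cong K_v\times K_v$ we get $x_v\in\{\phi'|_{K_v},(\iota\phi')|_{K_v}\}$ for each $v$. Now $\phi'$ is separable (else $\phi'^*\vec\omega=0$, whence $\eta_i|_{K_v}=x_v^*\omega_i=\pm(\phi'|_{K_v})^*\omega_i=0$ for all $i,v$, so $\vec\eta=0$); pick $i$ with $\eta_i':=\phi'^*\omega_i\ne 0$, so $\eta_i'|_{K_v}\ne 0$ for all $v$. Using $\iota^*\omega_i=-\omega_i$ we get $x_v^*\omega_i=\epsilon_v\,\eta_i'|_{K_v}$ with $\epsilon_v=\pm1$ recording the sheet; comparing with $x_v^*\omega_i=\eta_i|_{K_v}$ shows $\eta_i/\eta_i'\in K^\times$ equals $\epsilon_v$ in every completion, so $(\eta_i/\eta_i')^2=1$ in $K$ and $\epsilon_v$ is a constant $\epsilon$. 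As $\eta_i'|_{K_v}\ne 0$ the sign determines the sheet, so $x_v=\phi'|_{K_v}$ for all $v$ (if $\epsilon=+1$) or $x_v=(\iota\phi')|_{K_v}$ for all $v$ (if $\epsilon=-1$); either way $(x_v)\in C(K)$.

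When $p=2$ everything above goes through up to the two lifts $\phi',\iota\phi'$ with $x_v\in\{\phi'|_{K_v},(\iota\phi')|_{K_v}\}$, but the last step collapses: in characteristic $2$ one has $\iota^*\omega_i=-\omega_i=\omega_i$ (equivalently, $\iota$ acts as $-1$ on $J$ and $-1$ is the identity on $\ker F\subseteq J[2]$), so the datum $\vec\eta$ cannot detect the sheet, and a genuinely mixed adelic point does survive $(C',\ker F)$ without being global. The remedy is to rerun the argument with $F$ replaced by $F^2$: for the torsor $(C'',\ker F^2)$ (pullback of $F^2\colon J^{(-2)}\to J$) there is an analog of the preceding Lemma whose differential datum also records the holomorphic differentials of $C^{(-1)}$, on which $\iota$ acts non-trivially (equivalently, $-1$ is not the identity on $\ker F^2$), so surviving $(C'',\ker F^2)$ again pins down a single sheet. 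I expect this last case to be the main obstacle: one must set up the $\ker F^2$-analog of the Lemma and check that the hyperelliptic involution acts non-trivially on the resulting data for \emph{every} genus-$\ge2$ hyperelliptic curve in characteristic $2$, the delicate point being abelian varieties $J$ with $\ker F^2\subseteq J[2]$, where one may be forced to a higher iterate $F^n$ (note $\ker F^n\not\subseteq J[2]$ once $n>2$, by orders). By comparison, the reconstruction from the canonical map is routine given the Lemma, the only other point needing care being the local--global input for the quadratic extension $L$.
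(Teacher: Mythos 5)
Your proposal is correct where it is carried out, and it follows essentially the same route as the paper: surviving $(C',\ker F)$ gives a global class $\xi\in H^1(K,\ker F)$, and $\vec\eta=\mu_K(\xi)$ provides differential data $\eta_i|_{K_v}=x_v^*\omega_i$, which one then projects through the canonical map to pin down $x_v$. The execution differs slightly. In the non-hyperelliptic case the paper simply notes that each $x_v$ lies in $\kappa^{-1}(P)$ with $P\in\PP^{g-1}(K)$, whereas you reconstruct a morphism $\phi:D\to C$ from the functions $g_i=\eta_i/\eta_1$; these are equivalent but yours is a bit more laborious. In the hyperelliptic case the paper observes that $\kappa^{-1}(P)$ is a locally trivial $\Z/2\Z$-torsor, hence a pair of global points $Q,Q'$, and that $\mu(Q)=-\mu(Q')$ separates them when $p\neq 2$; your version splits the quadratic \'etale algebra $L=\F(C)\otimes_{\F(\PP^1)}K$ using the same local--global input for $\Z/2\Z$, and separates the two lifts $\phi',\iota\phi'$ via the global sign $\eta_i/\eta_i'\in\{\pm1\}$, which is exactly the same mechanism in a different guise. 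Two minor points: your parenthetical treatment of $g=1$ is unsubstantiated, but it is also irrelevant since the paper's standing hypothesis is genus $>1$; and both you and the paper omit the $p=2$ hyperelliptic case, though the paper at least names the replacement ($F^2$-descent valued in $W_2(K)$) where you only indicate where you expect the difficulty to lie.
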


\begin{proof}
Let $\omega_1,\dots,\omega_g$ be a basis of holomorphic differentials of $C$ over $\F$ (so of $J$ as well). Choose a separating variable $t \in \F(C)$ and write $\omega_i = f_i dt$ with $f_i \in \F(C)$.  This determines, for each separable $L/K$, an isomorphism $\Hom(\Omega^1_{J/L},\Omega_{L/\F}) \simeq \Omega^{\oplus g}_{L/\F}$. Let $\mu_L : J(L) \to \Omega^{\oplus g}_{L/\F}$ be the map obtained by composing with the map given by Lemma~\ref{lem:deltaFL}. If $\delta_{F,K_v}(x_v) \ne 0$, then the image of $\mu_{K_v}(x_v)$ in $\PP(\Omega_{K_v/\F}^{\oplus g}) = \PP^{g-1}(K_v)$ is the point $(f_1(x_v) : \dots : f_g(x_v))$, which is the image of $x_v$ under the canonical map $C \to \PP^{g-1}$. 

If $x=(x_v) \in C(\A_K)$ survives $F$-descent, then there exists $\xi \in \HH^1(K,\ker F)$ such that for each $v$, $\mu_{K_v}(x_v) = \Phi_K(\xi) \in \Omega^{\oplus g}_{K/\F} \subset \Omega_{K_v/\F}^{\oplus g}$. In this case the point $(f_1(x_v) : \dots : f_g(x_v))$ has coordinates in $K$ and is independent of $v$. This immediately implies that $(x_v) \in C(K)$ 
%
%
unless $C$ is hyperelliptic and $C \to \PP^{g-1}$ is not ramified above $P$. In this case the fiber of $C \to \PP^{g-1}$ above $P$ is a locally trivial torsor under $\Z/2\Z$ and, hence, consists of a pair of global points $Q,Q' \in C(K)$ interchanged by the hyperelliptic involution. Since $Q + Q'$ is linearly equivalent to an $\F$-rational divisor it lies in $F(J^{(-1)}(\F)) \subset \ker(\mu_K)$ and so $\mu_K(Q) = -\mu_K(Q')$. Since $p$ is odd, this shows that $\mu_K(Q) \ne \mu_K(Q')$. Since $x_v \in \{ Q,Q'\}$ and $\mu_{K_v}(x_v)$ does not depend on $v$, we conclude that all $x_v$ must be equal and so $(x_v) \in C(K)$. 

For hyperelliptic curves in characteristic $2$ a similar argument using $F^2$-descent taking values in a module over the ring $W_2(K)$ of length two Witt vectors gives the result (we omit details).
\end{proof}

\begin{Remark}
Part of the argument of Lemma \ref{lem:Frob} comes from the proof of \cite[Theorem 4.2.1]{ARM} where it is used 
in a (slightly) different context. 
\end{Remark}

\begin{Remark}
In the case $p = 2$ and $C$ is hyperelliptic the proof shows that an adelic point $(x_v)$ surviving $(C',\ker(F))$ which does not lift to $C'$ has $x_v \in C(K)$ for each $v$ and in particular that $C(K) \ne \emptyset$. The issue is that the various $x_v$ may differ from one another by the hyperelliptic involution and so $(x_v)$ may not be global.
\end{Remark}

\begin{proof}[Proof of Theorem~\ref{Thm:FrobDescent}]
	Let $(x_v) \in C(\A_K)^{F^\infty}$. In particular, $(x_v)$ survives $(C',\ker(F))$. If $\mu_{K_v}(x_v) \ne 0$, then $(x_v)$ is global by the lemma. Otherwise $(x_v) \in F(J^{(-1)}(\A_{K}))$ in which case $(x_v)$ lifts to some $(y_v) \in C^{(-1)}(\A_{K})$. Since $\ker(F)$ has nontrivial rational points defined over the separable closure of $K$, \cite[Main Theorem]{GA-T} gives that $\Sha^1(K,\ker(F)) = \ker\left(\HH^1(K,\ker(F)) \to \prod_v \HH^1(K_v,\ker(F))\right) = 0$. Hence $(x_v)$ does not lift to any nontrivial twist of $(C',\ker(F))$. It follows that $(y_v) \in C^{(-1)}(\A_K)^{F^\infty}$. This argument may be iterated, so we conclude that $(x_v)$ is either global or arbitrarily divisible by Frobenius, hence in $C(\A_{K,\F})$ by \cite[Lemma 2.13]{CVV}. This proves that $C(\A_K)^{F^\infty} \subset C(K) \cup C(\A_{K,\F})$. For the reverse inclusion note that $C'(\A_{K,\F}) = C^{(-1)}(\A_{K,\F})$ and the map $F : C'(\A_{K,\F}) \to C(\A_{K,\F})$ agrees with relative Frobenius map $C^{(-1)}(\A_{K,\F}) \to C(\A_{K,\F})$ which is surjective. Hence $C(\A_{K,\F}) \subset C(\A_K)^{F^\infty}$.
\end{proof}

\begin{Remark}
It follows from the proof of Theorem~\ref{Thm:FrobDescent} that the subset of $C(K)$ corresponding to
non-constant maps $D \to C$ is characterized as the set of adelic points surviving ${F^\infty}$-descent that, 
at some stage, lift to a non-trivial torsor.
\end{Remark}

\begin{Corollary}
All nontrivial twists of the Frobenius torsor $(C',\ker(F)) \in \Cov(C_K)$ satisfy the Hasse principle, i.e., if $Y \to C_K$ is a twist whose class in $\HH^1(K,\ker(F))$ is nontrivial and $Y(\A_K) \ne \emptyset$, then $Y(K) \ne \emptyset$.
\end{Corollary}

\begin{proof}
	If $Y$ is such a torsor and contains an adelic point, then this point maps to an adelic point $(x_v) \in C(\A_K)$ unobstructed by $F$. As in the previous proof $\HH^1(K,\ker(F)) \to \prod_v\HH^1(K_v,\ker(F))$ is injective by \cite[Main Theorem]{GA-T}. Since $Y$ is a nontrivial twist, this implies that there is some $v$ such that $\mu_{K_v}(x_v) \ne 0$. Hence, $C$ contains a global point that is the image of a $K_v$-point on $Y$ by the lemma (and the remark above). Since $Y \to C$ is purely inseparable, this implies that $Y$ contains a global point as well.
\end{proof}

Note that whether or not the trivial torsor satisfies the Hasse principle
depends on whether or not $C$ itself does.
	
	\section{Etale abelian descent obstruction}\label{sec:etale}
	
	In this section we show that, at the level of reduced adelic points, all of the information given by the Brauer group can be obtained from finite abelian and \'etale torsors. We use $X(\A_{K,\F})^\star$ to denote $X(\A_K)^\star \cap X(\A_{K,\F})$.
	
	\begin{Proposition}\label{prop:etale}
		Suppose $X$ is a closed subvariety of an abelian variety. Then $X(\A_{K,\F})^{\isog} = X(\A_{K,\F})^{\etaleisog}$.
	\end{Proposition}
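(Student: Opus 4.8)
The plan is to show that for reduced adelic points, surviving all isogeny torsors is equivalent to surviving only the étale ones. One inclusion, $X(\A_{K,\F})^{\isog} \subseteq X(\A_{K,\F})^{\etaleisog}$, is immediate since étale isogenies form a subclass of all isogenies. For the reverse inclusion, the key observation is that any isogeny $\phi : A' \to A$ of abelian varieties over $\F$ factors canonically as $\phi = \phi_{\etale} \circ \phi_{\inf}$, where $\phi_{\inf}$ is purely inseparable (a power of Frobenius composed with an isogeny killed by a power of Frobenius) and $\phi_{\etale}$ is étale; more precisely, over a perfect field the connected-étale sequence of $\ker(\phi)$ splits, so $\ker(\phi) = \ker(\phi)^0 \times \ker(\phi)^{\etale}$ as $\F$-group schemes, and this decomposition is respected by the isogeny. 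Thus it suffices to show that surviving the étale isogeny torsors together with the purely inseparable (Frobenius-type) torsors implies surviving all isogeny torsors, and then to observe that at the level of reduced adelic points the purely inseparable part imposes no extra condition.

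The heart of the matter is therefore the following claim: a reduced adelic point $(x_v) \in X(\A_{K,\F})$ automatically survives every torsor arising as the pullback of a purely inseparable isogeny $A' \to A$. This should follow from Theorem~\ref{Thm:FrobDescent} (or rather its proof): since $(x_v) \in X(\A_{K,\F}) = \prod_v X(\F_v)$ and the $\F_p$-Frobenius is surjective on $\F_v$-points (indeed bijective), every reduced adelic point lies in the image of $F$ at the level of reduced adelic points, and iterating shows it is infinitely Frobenius-divisible in $A(\A_{K,\F})$. Combined with the vanishing of $\Sha^1(K,\ker F)$ used in the proof of Theorem~\ref{Thm:FrobDescent}, this means $(x_v)$ lifts through each Frobenius torsor (to the trivial twist), hence survives it. A purely inseparable isogeny is a quotient of a power of Frobenius, so survival of the Frobenius torsors forces survival of the associated purely inseparable torsor as well.

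Putting the pieces together: given $(x_v) \in X(\A_{K,\F})^{\etaleisog}$ and an arbitrary isogeny $\phi : A' \to A$ over $\F$, factor $\ker(\phi) = \ker(\phi)^0 \times \ker(\phi)^{\etale}$, so that the torsor $X' \to X$ pulled back from $\phi$ is dominated by the fibre product of an étale isogeny torsor and a purely inseparable one. The point $(x_v)$ survives the étale factor by hypothesis and the purely inseparable factor by the previous paragraph; since the obstruction classes in $\prod_v \HH^1(K_v, \ker(\phi)) = \prod_v \HH^1(K_v, \ker(\phi)^0) \times \prod_v \HH^1(K_v,\ker(\phi)^{\etale})$ decompose compatibly, $(x_v)$ survives $\phi$. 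Hence $(x_v) \in X(\A_{K,\F})^{\isog}$.

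The step I expect to be the main obstacle is making the reduction to the Frobenius torsors fully rigorous: one must check that the purely inseparable part $\phi_{\inf}$ really is captured by the iterated $\F$-Frobenius isogeny $F^n$ in the precise sense needed (i.e.\ that $X'$ for $\phi_{\inf}$ is a quotient of $X'$ for $F^n$ as torsors, compatibly with the evaluation maps), and that the decomposition of Galois cohomology groups is compatible with evaluation of torsors at the point $(x_v)$. Care is also needed because the relevant torsors are a priori only torsors over $K$, not over $\F$, so one should phrase the Frobenius-divisibility argument over $K_v$ using that $\F_v$ is perfect; this is exactly the input already exploited in the proof of Theorem~\ref{Thm:FrobDescent}.
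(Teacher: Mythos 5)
Your proposal uses the same decomposition $G = G_c \times G_e$ of the kernel into connected and \'etale parts and disposes of the \'etale factor by hypothesis, exactly as in the paper. The difference lies entirely in the connected factor. The paper argues directly: since the torsor $(X'/G_e, G_c)$ descends to $\F$ and $(x_v)$ is a reduced adelic point, evaluation lands in $\prod_v \HH^1(\F_v, G_c)$, which vanishes because $G_c$ is connected and $\F_v$ is perfect. You instead reduce to Frobenius torsors and argue via Frobenius-divisibility of reduced adelic points, transferring to general connected kernels by noting that any finite connected $\F$-group scheme is killed by a power of Frobenius. This route is valid, but it requires precisely the extra verification you flag (that the purely inseparable isogeny torsor is a pushforward of a Frobenius torsor compatibly with evaluation), which the paper's argument sidesteps entirely. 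One further simplification even within your route: the appeal to $\Sha^1(K, \ker F) = 0$ is unnecessary here, since $F$ is bijective on $\F_v$-points, so the local evaluation classes of the Frobenius torsor at a reduced adelic point are already zero and survival needs no global input. Both approaches succeed; the paper's one-line observation that $\HH^1(\F_v, G_c) = 0$ for connected $G_c$ over the perfect field $\F_v$ is the more economical.
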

	
	\begin{proof}
		Suppose $P \in X(\A_{K,\F})^{\etaleisog}$ and let $(X',G)$ be the pullback of some isogeny $\phi : A' \to A$. We sill show that $P$ survives $(X',G)$. Since $G/\F$ is abelian it decomposes as a direct product $G = G_c \times G_e$ of a connected group scheme and an \'etale group scheme \cite[Proposition 11.3]{MilneAG}. This gives rise to an etale torsor $(X'/G_c,G_e)$ and an inseparable torsor $(X'/G_e, G_c)$, and it suffices to show that $P$ survives both. It survives the first by assumption. Evaluation of the second torsor at $P$ gives an element of $\prod_{v \in D^1}\HH^1(\F_v,G_c)$. We claim that $\HH^1(\F_v,G_c) = 0$, so $P$ lifts to $(X'/G_e, G_c)$ as well.
		
		To establish the claim, we use that there are abelian varieties $A,B$ over $\F$ fitting into an exact sequence
		\begin{equation}\label{eq:ES}
			0 \to G_c \to A \to B \to 0
		\end{equation}
		(See \cite[Appendix A]{MilneADT}). In particular, $A$ and $B$ are isogenous so $A(\F_v)$ and $B(\F_v)$ have the same cardinality by a celebrated result of Tate \cite{TateEndomorphisms}. Then $A(\F_v) \to B(\F_v)$ is surjective, being a homomorphism of finite groups of the same size with kernel $G_c(\F_v) = 0$. On the other hand $\HH^1(\F_v,A) = 0$ by Lang's theorem, so the long exact sequence of cohomology groups assoicated to \eqref{eq:ES} gives $\HH^1(\F_v,G_c) = 0$.
	\end{proof}
	
	\begin{Theorem}\label{thm:BrAKF}
		If $X/\F$ is either a curve or an abelian variety, then $X(\A_{K,\F})^{\Br} = X(\A_{K,\F})^{\fab} = X(\A_{K,\F})^{\etaleisog}$. If $X$ is a curve, then also $X(\A_{K,\F})^{\Br} = X^\textup{MW-Sieve}$. 
	\end{Theorem}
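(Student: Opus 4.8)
The plan is to obtain this theorem as a formal consequence of three results already in place: Proposition~\ref{prop:BMandDescent}, Proposition~\ref{prop:etale}, and Theorem~\ref{Thm:EtDescent}. The one structural point to keep in mind is that, by the very definition of the star-notation, $X(\A_{K,\F})^\star = X(\A_K)^\star \cap X(\A_{K,\F})$; hence any equality between two obstruction sets at the level of $X(\A_K)$ passes, upon intersecting with the closed subset $X(\A_{K,\F})$, to the corresponding equality at the level of reduced adelic points.

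To be able to invoke Proposition~\ref{prop:etale} I would first record that $X$ is, in either case, a subvariety of an abelian variety $A/\F$: if $X$ is an abelian variety take $A = X$, and if $X = C$ is a curve take $A = J$, into which $C$ embeds because it carries a $0$-cycle of degree $1$ over $\F$ (Hasse-Weil bounds). Thus $X(\A_K)^{\isog}$ and $X(\A_K)^{\etaleisog}$ are defined, independently of the embedding. Now Proposition~\ref{prop:BMandDescent} gives $X(\A_K)^{\Br} = X(\A_K)^{\fab} = X(\A_K)^{\isog}$; intersecting with $X(\A_{K,\F})$ yields $X(\A_{K,\F})^{\Br} = X(\A_{K,\F})^{\fab} = X(\A_{K,\F})^{\isog}$, and Proposition~\ref{prop:etale} supplies the final link $X(\A_{K,\F})^{\isog} = X(\A_{K,\F})^{\etaleisog}$. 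This proves the first assertion. When $X = C$ is a curve, Theorem~\ref{Thm:EtDescent} states that $C(\A_K)^{\etaleisog} \cap C(\A_{K,\F}) = C^\textup{MW-Sieve}$, that is, $C(\A_{K,\F})^{\etaleisog} = C^\textup{MW-Sieve}$; combined with the chain just established this gives $C(\A_{K,\F})^{\Br} = C^\textup{MW-Sieve}$, completing the proof.

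Since each step is a direct appeal to a result already available, there is no real obstacle here beyond correct bookkeeping; the mathematical content resides in the three statements being quoted. The point most worth double-checking is the middle equality $X(\A_K)^{\fab} = X(\A_K)^{\isog}$ from Proposition~\ref{prop:BMandDescent}: it asserts that enlarging from torsors pulled back along $\F$-isogenies to torsors under arbitrary finite abelian group schemes over $K$ does not shrink the obstruction set, and it is precisely this input that underlies the claim that abelian (indeed \'etale) torsors already capture all of the Brauer-Manin information at the level of reduced adelic points.
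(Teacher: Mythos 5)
Your proof of the first statement (the chain of equalities $X(\A_{K,\F})^{\Br} = X(\A_{K,\F})^{\fab} = X(\A_{K,\F})^{\etaleisog}$) is correct and matches the paper's route exactly: intersect the equalities of Proposition~\ref{prop:BMandDescent} with the closed subset $X(\A_{K,\F})$ and then apply Proposition~\ref{prop:etale}.

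The second statement is where there is a genuine gap. You deduce $C(\A_{K,\F})^{\Br} = C^\textup{MW-Sieve}$ by invoking Theorem~\ref{Thm:EtDescent}, but that theorem is not a prior result available to you here: in the paper's logical structure, Theorem~\ref{Thm:EtDescent} is \emph{derived as a consequence of} Theorem~\ref{thm:BrAKF} (the text immediately following the proof reads ``This proves Theorem~\ref{Thm:EtDescent}\dots''). Indeed, modulo the first assertion of the present theorem, Theorem~\ref{Thm:EtDescent} is equivalent to the second assertion you are trying to prove, so citing it makes the argument circular. What is actually needed is an independent proof of the equality $C(\A_{K,\F})^{\etaleisog} = C^\textup{MW-Sieve}$; the paper supplies this by appealing to the argument of \cite[Proposition 2.12]{CVV}, which is a genuinely additional input (establishing that surviving all \'etale isogeny descents on reduced adelic points is the same as lying in the Mordell--Weil sieve). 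Your write-up should replace the appeal to Theorem~\ref{Thm:EtDescent} with that argument, or at least acknowledge that it is an independent ingredient rather than a formal consequence of what has gone before.
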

	
	\begin{proof} 
		The first statement follows from Propositions~\ref{prop:BMandDescent} and~\ref{prop:etale}. The proof of \cite[Proposition 2.12]{CVV} gives the second.		
	\end{proof}
	
	This proves~Theorem~\ref{Thm:EtDescent} and, consequently, Theorem~\ref{Thm1}, which has the following Corollary.
	
	\begin{Corollary}\label{cor:conjectures}
		Suppose $C/\F$ is a curve. Then $\overline{C(K)} = C(\A_K)^{\Br}$ in $C(\A_K)$ if and only if $C^\textup{MW-Sieve} = \overline{r(C(K))}$ in $C(\A_{K,\F})$.
	\end{Corollary}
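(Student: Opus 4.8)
The plan is to deduce the Corollary from Theorem~\ref{Thm1} together with some elementary topology of the reduction map $r\colon C(\A_K)\to C(\A_{K,\F})$ and one approximation fact that is special to constant curves.

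\textbf{Rephrasing the two sides.} The set $C(\A_K)^{\Br}$ is closed in $C(\A_K)$ and contains $C(K)$ (by Proposition~\ref{prop:BMandDescent} it is a set of adelic points surviving a family of torsors), so $\overline{C(K)}\subseteq C(\A_K)^{\Br}$ always; combining this with Theorem~\ref{Thm1} shows that $\overline{C(K)}=C(\A_K)^{\Br}$ holds if and only if $C^{\textup{MW-Sieve}}\subseteq\overline{C(K)}$. For the other side I would first record that, since $C$ is proper, each $C(K_v)$ is compact, hence $C(\A_K)=\prod_v C(K_v)$ is compact; therefore the continuous surjection $r$ onto the Hausdorff space $C(\A_{K,\F})$ is a closed map, so that $r(\overline{S})=\overline{r(S)}$ for every $S\subseteq C(\A_K)$, and moreover $r$ restricts to the identity on $C(\A_{K,\F})$. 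Since the reduction of a point of $C(K)\subseteq J(K)$ lies in $\prod_v C(\F_v)$ and in the image of $J(K)$ in $\prod_v J(\F_v)$, we have $r(C(K))\subseteq C^{\textup{MW-Sieve}}$, and as $C^{\textup{MW-Sieve}}$ is closed this gives $\overline{r(C(K))}\subseteq C^{\textup{MW-Sieve}}$ always; hence $C^{\textup{MW-Sieve}}=\overline{r(C(K))}$ holds if and only if $C^{\textup{MW-Sieve}}\subseteq\overline{r(C(K))}$. (Here closures of subsets of $C(\A_{K,\F})$ are the same whether taken in $C(\A_{K,\F})$ or in $C(\A_K)$, since $C(\A_{K,\F})$ carries the subspace topology.) So the Corollary reduces to proving the equivalence
\[
	C^{\textup{MW-Sieve}}\subseteq\overline{C(K)}\quad\Longleftrightarrow\quad C^{\textup{MW-Sieve}}\subseteq\overline{r(C(K))}.
\]

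\textbf{The equivalence.} For the forward implication, applying $r$ to $C^{\textup{MW-Sieve}}\subseteq\overline{C(K)}$ and using that $r$ fixes $C^{\textup{MW-Sieve}}$ pointwise (as $C^{\textup{MW-Sieve}}\subseteq C(\A_{K,\F})$) and that $r(\overline{C(K)})=\overline{r(C(K))}$ yields $C^{\textup{MW-Sieve}}\subseteq\overline{r(C(K))}$. For the reverse implication, since $\overline{C(K)}$ is closed it suffices to establish the single inclusion $r(C(K))\subseteq\overline{C(K)}$: granting it, $\overline{r(C(K))}\subseteq\overline{C(K)}$ and hence $C^{\textup{MW-Sieve}}\subseteq\overline{r(C(K))}\subseteq\overline{C(K)}$.

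\textbf{The main step.} The inclusion $r(C(K))\subseteq\overline{C(K)}$ is the only genuinely non-formal point, and I expect it to be the main obstacle. Given $P\in C(K)=\Mor_\F(D,C)$, I would approximate the reduced adelic point $r(P)$ by the global points $F^{\,n}\circ P\in C(K)$, where $F\colon C\to C$ is the relative $q$-power Frobenius with $q=|\F|$, which is a morphism defined over $\F$. At a place $v$ with residue degree $d_v$, the point $F^{\,n}\circ P$ reduces at $v$ to $F^{\,n}(r_v(P))$, and $F^{\,n}$ acts on $C(\F_v)$ as the $q^n$-power Frobenius, so this equals $r_v(P)$ once $d_v\mid n$; moreover, since $x\mapsto x^{q^n}$ is additive in characteristic $p$, in local coordinates at $v$ the expansion of $F^{\,n}\circ P$ agrees with the constant lift of $r_v(P)$ modulo $\fm_v^{\,q^n}$. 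Hence, for any finite set $S$ of places, taking $n$ to be a sufficiently large multiple of $\lcm_{v\in S}d_v$ forces $F^{\,n}\circ P$ to lie in any prescribed basic neighbourhood of $r(P)$; therefore $r(P)\in\overline{C(K)}$, which completes the proof.
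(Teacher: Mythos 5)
Your proof is correct, and it follows the same essential idea as the paper's (approximate the reduced adelic point by iterates of Frobenius), but you package it differently in a way that is worth noting. The paper's reverse direction invokes \cite[Lemma 2.13]{CVV}, which asserts that $F^{n!}$ converges \emph{uniformly} to $r$ on $C(\A_K)$; this uniformity is needed there because the paper approximates a sequence $r(P_n)\to P$ with varying $P_n\in C(K)$. You instead observe that $r$ is a closed map (properness of $C$ gives compactness of $C(\A_K)$) fixing $C(\A_{K,\F})$ pointwise, and this reduces both implications to the single pointwise inclusion $r(C(K))\subseteq\overline{C(K)}$, which you then prove directly by analyzing $F^n\circ P$ for a fixed $P\in\Mor_\F(D,C)$ in local coordinates. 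The gain is that you avoid citing the uniform-convergence lemma and make the Frobenius approximation self-contained and visibly local; the cost is a slightly longer formal preamble. Both arguments are sound; the underlying mechanism (that reducing a global morphism $D\to C$ at $v$ is captured by $q^n$-power Frobenius once $d_v\mid n$) is identical.

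One small point of hygiene: when you write that closures in $C(\A_{K,\F})$ and $C(\A_K)$ agree, the justification is not merely that the topology on $C(\A_{K,\F})$ is the subspace topology but that $C(\A_{K,\F})$ is \emph{closed} in $C(\A_K)$ (as the paper records in Section~\ref{sec:redadeles}); without closedness the ambient closure could be strictly larger. This is true here, so the argument goes through, but the justification should cite closedness rather than the subspace property alone.
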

	
	\begin{proof}
		Assume $\overline{C(K)} = C(\A_K)^{\Br}$ and let $P \in C^\textup{MW-Sieve}$. By Theorem~\ref{Thm1} there exist a sequence of elements $P_n \in C(K)$ converging to $P$ in $C(\A_K)$. Since $r : C(\A_K) \to C(\A_{K,\F})$ is continuous, $r(P_n) \to r(P) = P$, so $P \in \overline{r(C(K))}$. 
		
		Conversely, suppose $C^\textup{MW-Sieve} = \overline{r(C(K))}$ and let $P \in C(\A_K)^{\Br}$. By Theorem~\ref{Thm1} we have $P \in C(K)$ or $P \in C^\textup{MW-Sieve}$. In the former case $P$ obviously lies in the closure of $C(K)$, so suppose $P \in C^\textup{MW-Sieve}$. By assumption there are $P_n \in C(K)$ such that $r(P_n) \to P$ in the subspace $C(\A_{K,\F}) \subset C(\A_K)$. By \cite[Lemma 2.13]{CVV} the sequence $F^{n!} : C(\A_K) \to C(\A_K)$ of $n!$-th iterates of the $\F$-Frobenius converges uniformly to $r$. Hence $\lim_n F^{n!}P_n = P$, showing that $P$ lies in the closure of $C(K)$ in $C(\A_K)$.		
	\end{proof}
	
	Theorem~\ref{thm:BrAKF} does not hold if one replaces $X(\A_{K,\F})$ with $X(\A_K)$.
	
	\begin{Proposition}\label{prop:example1}
		Let $X/\F$ be an abelian variety of $p$-rank $0$. There are infinitely many global function fields $K = \F(D)$ such that $X(K) = X(\A_K)^{\Br} \ne X(\A_K)^{\etaleisog}$. 
	\end{Proposition}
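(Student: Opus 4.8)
The plan is to analyze the three sets $X(K)$, $X(\A_K)^{\Br}$, and $X(\A_K)^{\etaleisog}$ separately using the structural results already available. First I would establish $X(K) = X(\A_K)^{\Br}$. By Proposition~\ref{prop:BMandDescent}, since $X$ is an abelian variety we have $X(\A_K)^{\Br} = \overline{X(K)}$, the topological closure of $X(K)$ inside $X(\A_K) = \prod_v X(\mathcal O_v)$. By hypothesis $X(K) = X(\F)$ is a \emph{finite} set (a constant abelian variety has only finitely many $\F$-points), and a finite subset of a Hausdorff space is closed, so $\overline{X(K)} = X(K)$. Hence $X(K) = X(\A_K)^{\Br} \subseteq X(\A_K)^{\etaleisog}$, the inclusion being the standard one from Proposition~\ref{prop:BMandDescent} (every adelic point that is global survives every torsor). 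It remains to show this last inclusion is strict.

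Next I would exhibit an adelic point in $X(\A_{K,\F})$ that survives all \'etale isogeny torsors but is not global. By Theorem~\ref{thm:BrAKF} applied to the abelian variety $X$, we have $X(\A_{K,\F})^{\Br} = X(\A_{K,\F})^{\etaleisog}$, so it suffices to produce a reduced adelic point that is orthogonal to $\Br(X)$ but not in $X(K)$. By Proposition~\ref{prop:BMandDescent} again, $X(\A_K)^{\Br} = \overline{X(K)} = X(K)$, so in fact $X(\A_{K,\F})^{\Br} = X(K) \cap X(\A_{K,\F})$; this would make the proposition \emph{false} unless I am careful. So the correct route is not through $X(\A_{K,\F})^{\etaleisog}$ but through the genuinely larger set $X(\A_K)^{\etaleisog}$, which is \emph{not} covered by Theorem~\ref{thm:BrAKF}. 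The key point is that non-reduced (ramified in the vertical/local sense) adelic points can survive all \'etale torsors precisely because \'etale covers see nothing beyond the residue field: an \'etale isogeny torsor $(X',G)$ with $G$ \'etale over $\F$ evaluated at a local point $x_v \in X(\mathcal O_v)$ depends only on $r_v(x_v) \in X(\F_v)$, since $\HH^1(\mathcal O_v, G) = \HH^1(\F_v, G)$ by \'etale-cohomological invariance over a Henselian pair. Thus an adelic point $(x_v)$ survives all \'etale isogeny torsors if and only if its reduction $r((x_v))$ does; and since $X$ is an abelian variety, $r(X(K))$ is dense in $X(\A_{K,\F})$ is false, but $X(\A_{K,\F})^{\etaleisog}$ still equals $r(X(K))$-closure which by the $p$-rank-$0$ hypothesis and finiteness equals $X(\F)$. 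Lifting any non-constant section $(x_v)$ with $r_v(x_v) \in X(\F) = X(\F_v)^{G}$ but with $x_v \notin X(\F) \subset X(\mathcal O_v)$ for at least one $v$: such a point exists as soon as some $X(\mathcal O_v) \supsetneq X(\F)$, which holds because $X(\mathcal O_v) = X(K_v)$ is infinite (it surjects onto $X(\F_v)$ with positive-dimensional fibers, the formal group). This $(x_v)$ survives all \'etale isogeny torsors, lies in $X(\A_K)$, and is not in $X(K)$, establishing $X(K) \neq X(\A_K)^{\etaleisog}$.

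The main obstacle, and the step requiring the most care, is the claim that $X(\A_K)^{\etaleisog}$ is detected purely at the level of residue fields, i.e.\ that $(x_v)$ survives an \'etale isogeny torsor iff $r((x_v))$ does. This rests on the fact that for $G$ a finite \'etale group scheme over $\F$ (hence over $\mathcal O_v$), the restriction map $\HH^1(\mathcal O_v, G) \to \HH^1(\F_v, G)$ is a bijection — a consequence of the equivalence of the \'etale sites of $\Spec \mathcal O_v$ and $\Spec \F_v$ for a complete (or Henselian) discrete valuation ring, which also identifies the evaluation of the torsor at $x_v$ with its evaluation at $r_v(x_v)$ because $X' \to X$ is finite \'etale and the section $x_v$ specializes to $r_v(x_v)$. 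One must also check the compatibility with the global-to-local map, i.e.\ that surviving is governed by $\HH^1(K,G) \to \prod_v \HH^1(K_v,G)$ and that this is compatible with the reduction; this is routine given the functoriality built into the definition of ``survives'' in Section 2.4. The $p$-rank-$0$ hypothesis is what makes $X(\A_{K,\F})^{\etaleisog}$ collapse all the way to $X(\F)$ (via the Frobenius/MW-sieve analysis, since in $p$-rank $0$ all of $X(\F_v)$ is $p$-divisible and the \'etale part carries no obstruction), so that the discrepancy is entirely accounted for by the ``infinitesimal'' directions in $X(\mathcal O_v)$ that $X(\A_{K,\F})$ discards but $X(\A_K)$ retains.
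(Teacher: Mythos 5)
Your argument is correct and takes a genuinely different route from the paper's. The paper's proof shows that an adelic point $(x_v)$ with $r(x_v)=0_X$ is divisible by every integer $n$ prime to $p$ (since the kernel of reduction is prime-to-$p$ divisible), and then invokes the $p$-rank-$0$ hypothesis to conclude that every \'etale isogeny is dominated by some $[n]$ with $p \nmid n$, so that $(x_v)$ survives. Your proof instead rests on the observation that for a torsor $(X',G)$ with $G$ finite \'etale over $\F$, the evaluation at an $\mathcal{O}_v$-point factors through $\HH^1(\mathcal{O}_v,G)\hookrightarrow \HH^1(K_v,G)$, and $\HH^1(\mathcal{O}_v,G)\cong\HH^1(\F_v,G)$ by the equivalence of the finite-\'etale sites over a Henselian local ring and its residue field; hence the evaluation of any \'etale-isogeny torsor at $(x_v)$ depends only on $r((x_v))$. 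This is a cleaner and, interestingly, a strictly more general argument: unlike the paper's, it does not actually use the $p$-rank-$0$ hypothesis anywhere to establish $X(K)\neq X(\A_K)^{\etaleisog}$ (the hypothesis only enters as a way to guarantee $X(K)=X(\F)$, but you assume that directly). What each approach ``buys'': the paper's divisibility argument stays entirely inside the language of the Mordell--Weil sieve and avoids invoking Hensel's lemma for torsors, while yours localizes the phenomenon to a genuinely sheaf-theoretic statement (\'etale torsors are insensitive to the infinitesimal directions of $X(\mathcal{O}_v)$), which transparently explains why \'etale descent cannot cut out $X(K)$ inside $X(\A_K)$.

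One minor imprecision to fix: when you construct the witness $(x_v)$ you allow $r_v(x_v)\in X(\F)$ place by place, but for the evaluation to agree with that of a \emph{global} point you need $r_v(x_v)$ to be the \emph{same} element $P\in X(\F)$ for every $v$ (e.g.\ $P=0_X$). If the $r_v(x_v)$ are allowed to differ across places, the adelic reduction $r((x_v))$ is not a global point and the evaluation classes need not glue to a single class in $\HH^1(K,G)$. Equivalently, phrase it as: $X(\A_K)^{\etaleisog}=r^{-1}\bigl(X(\A_{K,\F})^{\etaleisog}\bigr)=r^{-1}(X(\F))$, where the first equality is your Henselian-invariance claim and the second is Theorem~\ref{thm:BrAKF} plus $\overline{X(K)}=X(\F)$; this set is infinite because each fiber of $r$ contains the (infinite) kernel of reduction.
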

	
	\begin{proof}
		Let $(x_v) \in X(\A_K)$ be such that $r(x_v) = 0_X$. For each $v$, the kernel of reduction in $X(K_v)$ is divisible by all integers prime to $p$. So, for any $n$ prime to $p$ we have $(x_v) \in nX(\A_K)$. The assumption on the $p$-rank implies that all (geometrically connected) torsors under \'etale abelian group schemes are dominated by a twist of $[n] : X \to X$ for some $n$ prime to $p$ and so $(x_v) \in X(\A_K)^{\etaleisog}$. In particular, for any $K$ the set $X(\A_K)^{\etaleisog}$ is infinite. However, if $K = \F(D)$ is such that $X$ and $\Jac(D)$ have no isogeny factors in common. Then $X(K) = X(\F)$ is finite.
	\end{proof}

	For a constant variety $X/\F$ let $X(\A_K)^\textup{\'et}$ denote the set of adelic points which survive all torsors under finite \'etale group schemes and set $X(\A_{K,\F})^\textup{\'et} = X(\A_K)^\textup{\'et} \cap X(\A_{K,\F})$. In the case of curves the \'etale torsors do not always cut out the set of rational points, even if one allows torsors under finite non-abelian group schemes that are not required to be geometrically connected. 
	
	\begin{Proposition}\label{prop:example2}
		There exists a curve $C/\F$ of genus $2$ such that $C(K) = C(\A_K)^{\Br} \ne C(\A_K)^{\etale}$.
	\end{Proposition}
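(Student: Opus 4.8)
The goal is to exhibit a single constant curve $C/\F$ for which the set of rational points is cut out by the Brauer group (equivalently, by Proposition~\ref{prop:BMandDescent}, by all abelian isogeny torsors), yet \emph{étale} torsors — even non-abelian, non-geometrically-connected ones — leave extra adelic points alive. In view of Theorem~\ref{Thm1} and Theorem~\ref{Thm:FrobDescent}, the gap between $C(\A_K)^{\Br}$ and $C(\A_K)^{\etale}$ must come entirely from \emph{non-étale} (i.e.\ purely inseparable, Frobenius-type) descent, which the étale obstruction cannot see. So the plan is to choose $C$ so that (i) $C^{\textup{MW-Sieve}}$ is empty or contained in $r(C(K))$, forcing $C(\A_K)^{\Br} = C(K)$, while (ii) there is a reduced adelic point $(x_v) \in C(\A_{K,\F})$ surviving every étale torsor but not lying in $C(K)$.

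First I would pick $C$ of genus at least $2$ with $\Jac(C)$ of $p$-rank $0$ (e.g.\ a supersingular curve, or one whose Jacobian is a product of supersingular elliptic curves), defined over a conveniently small $\F = \F_p$, and arrange that $C(\F) = C(\Fbar)^{G_\F}$ already exhausts the ``constant'' points — in particular I want $C(K)$ to contain no non-constant maps $D \to C$, which by the Remark after Theorem~\ref{Thm:FrobDescent} is automatic once $D$ is chosen with small genus relative to $C$ (Theorem~\ref{thm:Brgenus} gives exactly this), so $C(K) = C(\F)$ is finite. Next, using $p$-rank $0$, I would run the argument of Proposition~\ref{prop:example1} at the level of the curve: the étale part of any torsor in $\Cov(C_K)$, after pulling back along $C \to J$, is dominated by a twist of a torsor coming from an isogeny $J' \to J$ of degree prime to $p$ — and since the Jacobian has $p$-rank $0$, \emph{every} étale isogeny has degree prime to $p$. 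For a reduced adelic point $(x_v)$ with $r_v(x_v)$ equal to a \emph{fixed} $\F$-rational point $P_0 \in C(\F)$ for all $v$, the difference $(x_v) - P_0$ in $J(\A_{K,\F})$ is $0$, hence trivially divisible by every integer prime to $p$; this shows $(x_v)$ survives all geometrically connected étale abelian torsors, and a standard twisting/restriction-of-scalars argument (as in Stoll) upgrades this to survival of \emph{all} étale torsors, abelian or not, connected or not. But $C(\A_{K,\F})$ contains points other than the image of $C(\F)$ under $r$ as soon as $D$ has more than one closed point with appropriate residue fields, because $r(P_0)$ is the constant adelic point while $(x_v)$ need not be constant — so $(x_v) \notin r(C(K))$, and by Theorem~\ref{Thm1} combined with $C^{\textup{MW-Sieve}} = \overline{r(C(K))}$ (which I must verify) it is not in $C(\A_K)^{\Br}$ either.

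The main obstacle is pinning down a concrete $C$ (and $D$) for which $C^{\textup{MW-Sieve}}$ really does equal $\overline{r(C(K))}$, so that $C(\A_K)^{\Br} = C(K)$ holds on the nose; one clean route is to invoke Theorem~\ref{thm:Brgenus} directly — take $\operatorname{genus}(D) < \operatorname{genus}(C)$ — which gives $C(\A_K)^{\Br} = C(K) = C(\F)$ for free, reducing the problem to the purely combinatorial task of checking that $\prod_v C(\F_v)$ is strictly larger than the diagonal image of $C(\F)$ and that the non-diagonal points in it survive étale descent. The secondary subtlety is making the ``all étale torsors, including non-abelian and non-connected'' claim airtight: one reduces a general étale torsor to a geometrically connected one by passing to a connected component and applying restriction of scalars along the corresponding finite separable extension $L/K$, then to an abelian one by taking the maximal abelian quotient and noting that the Frattini-type argument for curves in an abelian variety (the image in $J$) controls the relevant cohomology; here the $p$-rank $0$ hypothesis is doing the real work, since it guarantees no wild/inseparable phenomena can enter through an étale torsor. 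I would then close by remarking that this shows non-étale torsors are genuinely needed, complementing Proposition~\ref{prop:example1} for abelian varieties.
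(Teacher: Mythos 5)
Your construction has a fatal internal contradiction that sinks the whole approach. You specify a \emph{reduced} adelic point $(x_v) \in C(\A_{K,\F})$ with $r_v(x_v) = P_0$ for all $v$, where $P_0 \in C(\F)$ is fixed; but a reduced adelic point \emph{is} its own reduction (each $x_v \in C(\F_v)$ and $r_v$ acts as the identity there), so $r_v(x_v) = P_0$ forces $x_v = P_0$ for every $v$, i.e.\ $(x_v)$ \emph{is} the constant adelic point $r(P_0) \in r(C(K))$. You cannot then conclude ``$(x_v)$ need not be constant — so $(x_v) \notin r(C(K))$''; the two sentences contradict each other. The underlying reason the reduced-adelic strategy cannot succeed is Theorem~\ref{thm:BrAKF} itself: it says $C(\A_{K,\F})^{\Br} = C(\A_{K,\F})^{\etaleisog} \supset C(\A_{K,\F})^{\etale}$, so on reduced adelic points \'etale descent detects at least as much as the Brauer group, and no reduced adelic point can survive every \'etale torsor yet fail Brauer--Manin. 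The witness for Proposition~\ref{prop:example2} has to be genuinely non-reduced.

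That is precisely what the paper does. It takes $K = \F_p(t)$, $p \ne 2,5$, $C \colon y^2 = x^5+1$, so that $C(K) = C(\F)$ and $J(K) = J(\F)$ are finite and hence $C(\A_K)^{\Br}=C(K)$ by Scharaschkin. It then chooses, at each place $v$, a local parameter $t_v$ and sets $x_v = \bigl(t_v, \sum_j \binom{1/2}{j} t_v^{5j}\bigr) \in C(\calO_v)$ — a point inside the residue disk of the global point $(0,1)$ but distinct from it and from the other components. The key observation is elementary: a finite \'etale cover is locally constant on residue disks, so $x_v$ and $(0,1)$ evaluate identically in $\HH^1(K_v,G)$ for every finite \'etale $G$, and $(x_v)$ therefore lifts to exactly the twist that $(0,1)$ does. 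No $p$-rank hypothesis, no Frattini/restriction-of-scalars machinery, and no invocation of Theorem~\ref{thm:Brgenus} is needed. Your secondary plan — using $p$-rank $0$ so that the kernel of reduction is divisible prime to $p$, as in Proposition~\ref{prop:example1} — is a legitimate alternative idea, but note that (i) it only applies to adelic points in the kernel of reduction in $J(\A_K)$, not to reduced adelic points, so you would still be constructing a non-reduced point; and (ii) that argument handles \'etale \emph{abelian isogeny} torsors, and your claimed upgrade to all finite \'etale torsors (non-abelian, non-connected) via ``restriction of scalars as in Stoll'' is nontrivial and is not something you have actually carried out. The paper's residue-disk argument sidesteps all of this.
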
	
	
	\begin{proof}
		Take $K=\F_p(t)$, $p \ne 2,5$ and $X: y^2=x^5+1$, so $C(K) = C(\F)$ and $\overline{J(K)} = J(K) = J(\F)$ in $J(\A_K)$. By \cite[Prop. 4.6]{PoonenVoloch}, which applies since $\Sha(K,J)$ is finite by \cite[Theorem 3]{Milne}, we have that $C(\A_K)^{\Br} = \overline{J(K)} \cap C(\A_K) = J(\F) \cap C(\A_K) = C(\F)$ in $J(\A_K)$. Now, for every place $v$ of $K$ choose a local parameter $t_v$ and let $x_v = (t_v,\sum_j \binom{1/2}{j} t_v^{5j})$. Then the adelic point $(x_v)$ survives all finite \'etale torsors, lifting to the same twist that $r((x_v))=(0,1) \in C(\F) \subset C(K)$ does. Indeed, this is a special case of the following proposition.
	\end{proof}

	\begin{Proposition}
		Let $(x_v) \in X(\A_K)$ be an adelic point on a constant variety $X/\F$. Then $(x_v) \in X(\A_K)^\textup{\'et}$ if and only if $r(x_v) \in X(\A_{K,\F})^\textup{\'et}$.
	\end{Proposition}
	
	\begin{proof}
		First suppose $(x_v) \in X(\A_K)^\textup{\'et}$. Let $F \colon X \to X$ be the Frobenius morphism. By functoriality of descent (e.g., \cite[Lemma 5.3(2)]{Stoll}) if $(x_v) \in X(\A_K)^\textup{\'et}$ then $F(x_v)$ is as well. It follows that the same is true for $F^{n!}(x_v)$ for any $n \ge 1$. This sequence converges to $r(x_v)$ in the adelic topology. Since $X(\A_K)^\textup{\'et}$ is closed by \cite[Theorem 8.4.6]{PoonenRatPoints} we have that $(x_v) \in X(\A_{K,\F})^\textup{\'et} = X(\A_K)^\textup{\'et} \cap X(\A_{K,\F})$.
		
		For the converse, suppose $r(x_v) \in X(\A_{K,\F})^\textup{\'et}$ and let $(X',G)$ be an $X$-torsor under the finite \'etale group scheme $G$. Replacing $(X',G)$ by a twist if necessary we may assume $r(x_v)$ lifts to an adelic point on $X'$. The fiber of $X' \to X$ over $x_v \in X(K_v) = X(\calO_v)$ gives a class $T_v$ in $\HH^1(\calO_v,G)$. By Hensel's Lemma (which applies since $G$ is \'etale) the reduction map $\calO_v \to \F_v$ induces an isomorphism $\HH^1(\calO_v,G) \simeq \HH^1(\F_v,G)$. Since $r_v(x_v) \in X(\F_v)$ lifts to $X'_v$, the image of $T_v$ in $\HH^1(\F_v,G)$ is trivial. Hence $T_v$ is trivial and so the fiber of $X'$ above $x_v$ contains a $K_v$-point. Thus $X'$ contains a lift of $(x_v)$.
	\end{proof}

\section{Reduced adelic points and maps}\label{sec:maps}
 
Let $J_D$ be the Jacobian of $D$ and fix an embedding $D \to J_D$ corresponding to a $0$-cycle of degree $1$ (which exists by the Hasse-Weil bounds). Recall that by Lemma~\ref{lem:maps} there is a bijection between the set $X(\A_{K,\F})$ of reduced adelic points and the set $\Map_{G_\F}(D(\Fbar),X(\Fbar))$ of Galois equivariant maps on geometric points. 
 
 \begin{Theorem}\label{thm:abdescA}
 	Let $A/\F$ be an abelian variety. For any pair $(\psi,P) \in \Hom_{G_\F}(J_D(\Fbar),A(\Fbar)) \times A(\F)$ the map $\psi_{|_D}+ P : D(\Fbar) \to A(\Fbar)$ corresponds to a reduced adelic point which survives all torsors in $\Cov(A_K)$. This induces bijections
 	\[
 		\overline{r(A(K))} = A(\A_{K,\F})^{\Br} \leftrightarrow \Hom_{G_\F}(J_D(\Fbar),A(\Fbar)) \times A(\F)\,.
 	\]
 \end{Theorem}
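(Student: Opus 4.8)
\medskip

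The plan is to separate the purely set-theoretic content of the statement from the part that invokes the Brauer group, and to reduce the latter to Proposition~\ref{prop:BMandDescent} together with the Frobenius convergence of \cite[Lemma~2.13]{CVV}. Write $\iota\colon D\to J_D$ for the fixed embedding attached to a degree-one $0$-cycle $\mathfrak z$ on $D$, and recall the universal property of the Jacobian, which identifies $A(K)=\Mor_\F(D,A)$ with $\Hom_\F(J_D,A)\times A(\F)$, a pair $(\psi,P)$ corresponding to the morphism $\psi\circ\iota+P$; in particular, for every $f\in A(K)$ the linear extension of $f$ to $\Div^0(D_{\Fbar})$ kills principal divisors. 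Given $(\psi,P)\in\Hom_{G_\F}(J_D(\Fbar),A(\Fbar))\times A(\F)$, the map $\psi\circ\iota+P\colon D(\Fbar)\to A(\Fbar)$ is $G_\F$-equivariant, hence by Lemma~\ref{lem:maps} it is a reduced adelic point, which I denote $\Phi(\psi,P)$. Since $J_D(\Fbar)$ is generated by the differences $\iota(d)-\iota(d')$ (a curve generates its Jacobian), $\Phi$ is injective: $\Phi(\psi,P)$ already determines $\psi$ on all such differences, hence $\psi$, and then $P$.

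Next I would record the identity $\overline{r(A(K))}=A(\A_{K,\F})^{\Br}$ and that this set also coincides with the reduced adelic points surviving every torsor in $\Cov(A_K)$ (call this set $A(\A_K)^{\fcov}\cap A(\A_{K,\F})$). By Proposition~\ref{prop:BMandDescent}, $A(\A_K)^{\Br}=\overline{A(K)}$, and the inclusions $\overline{A(K)}\subseteq A(\A_K)^{\fcov}\subseteq A(\A_K)^{\fab}=A(\A_K)^{\Br}$ (the first because rational points survive every torsor and survival is a closed condition, the second because surviving more torsors is a stronger condition) force $A(\A_K)^{\fcov}=\overline{A(K)}$. Using continuity of $r$, that $r$ is the identity on $A(\A_{K,\F})$, and the uniform convergence $F^{n!}\to r$ of \cite[Lemma~2.13]{CVV} — so that $r(f)=\lim_n F^{n!}f\in\overline{A(K)}$ for every $f\in A(K)$ — one obtains $\overline{r(A(K))}=\overline{A(K)}\cap A(\A_{K,\F})=A(\A_K)^{\Br}\cap A(\A_{K,\F})=A(\A_{K,\F})^{\Br}$.

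It then remains to show $\Phi$ is a bijection onto $\overline{r(A(K))}$; this also yields the first assertion of the theorem, because $\overline{r(A(K))}\subseteq\overline{A(K)}=A(\A_K)^{\fcov}$. For surjectivity, let $Q\in\overline{r(A(K))}$ and let $\widetilde Q\colon\Div(D_{\Fbar})\to A(\Fbar)$ be the linear extension of $Q$. Given a principal divisor $\divv(h)$ on $D_{\Fbar}$, choose $f\in A(K)$ with $r(f)$ agreeing with $Q$ at the finitely many places below the support of $\divv(h)$; by $G_\F$-equivariance of $Q$ and of $r(f)$, then $Q$ and $f$ agree on that support, so $\widetilde Q(\divv h)=f_*(\divv h)=0$. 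Hence $\widetilde Q|_{\Div^0(D_{\Fbar})}$ factors through $\Pic^0(D_{\Fbar})=J_D(\Fbar)$, producing $\psi\in\Hom_{G_\F}(J_D(\Fbar),A(\Fbar))$, and with $P:=\widetilde Q(\mathfrak z)\in A(\F)$ one has $\psi(\iota(d))+P=\widetilde Q(d-\mathfrak z)+\widetilde Q(\mathfrak z)=Q(d)$, i.e.\ $Q=\Phi(\psi,P)$. That conversely $\Phi(\psi,P)\in\overline{r(A(K))}$ for every $(\psi,P)$ I would deduce from the density of $\Hom_\F(J_D,A)$ in $\Hom_{G_\F}(J_D(\Fbar),A(\Fbar))$ for the topology of pointwise convergence: granting this, given any finite set of places one finds $\psi'\in\Hom_\F(J_D,A)$ agreeing with $\psi$ on the Galois-stable subgroup of $J_D(\Fbar)$ generated by the differences $\iota(d)-\iota(d')$ for $d,d'$ above those places, and then, after adjusting the constant term to a suitable point of $A(\F)$, the morphism $\psi'\circ\iota+P'\in A(K)$ reduces to a point agreeing with $\Phi(\psi,P)$ at those places.

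The density statement is the crux, and its $p$-primary part is the main obstacle. Decompose $\Hom_{G_\F}(J_D(\Fbar),A(\Fbar))=\prod_\ell\Hom_{G_\F}(J_D(\Fbar)[\ell^\infty],A(\Fbar)[\ell^\infty])$. For $\ell\neq p$ the $\ell$th factor is $\Hom_{\Z_\ell[G_\F]}(T_\ell J_D,T_\ell A)=\Hom_\F(J_D,A)\otimes\Z_\ell$ by Tate's isogeny theorem, so $\Hom_\F(J_D,A)$ already has dense image there. For $\ell=p$ the factor is the group of $G_\F$-homomorphisms between the $\Fbar$-points of the étale parts of the $p$-divisible groups $J_D[p^\infty]$ and $A[p^\infty]$; here I would use Tate's theorem for $p$-divisible groups, $\Hom_\F(J_D,A)\otimes\Z_p\xrightarrow{\ \sim\ }\Hom_\F(J_D[p^\infty],A[p^\infty])$, together with the fact that over the perfect field $\F$ the connected–étale sequence of a $p$-divisible group splits, so that the target has $\Hom_\F(J_D[p^\infty]^{\etale},A[p^\infty]^{\etale})$ as a direct summand and the natural map onto it is surjective. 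Since $\Hom_\F(J_D,A)$ is dense in its profinite completion, it follows that it is dense in $\Hom_{G_\F}(J_D(\Fbar),A(\Fbar))$. This $p$-adic step is the only genuinely subtle point — one cannot in general lift a Galois-equivariant homomorphism on $p$-power torsion points to a homomorphism of abelian varieties, and it is precisely here that finiteness (hence perfectness) of $\F$ is used — while everything else is formal given Proposition~\ref{prop:BMandDescent} and \cite[Lemma~2.13]{CVV}.
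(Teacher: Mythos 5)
Your proof is correct, and while the backward direction (producing the reduced adelic point in $\overline{r(A(K))}$ from a pair $(\psi,P)$) is essentially the same Tate-density argument as the paper's --- both decompose the Tate module into \'etale and connected parts using perfectness of $\F$, the paper working with the full profinite group scheme $TA = T_{\textup{\'et}}A\times A_0$ and you working $\ell$-by-$\ell$ with Tate's theorem for $p$-divisible groups at $\ell=p$ --- the forward direction is genuinely different and arguably cleaner. The paper shows that $\widetilde Q$ kills principal divisors by a descent argument: it invokes Theorem~\ref{thm:BrAKF} to show that a point in $A(\A_{K,\F})^{\Br}$ survives $(1-F^n)$-descent for every $n$, and then uses finiteness of $\Sha(K,A)$ to replace the twist by one with a rational point $\gamma$, finally observing that $\psi$ agrees with $\gamma$ on $F^n$-fixed points. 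You instead establish the set equality $\overline{r(A(K))}=A(\A_{K,\F})^{\Br}$ at the outset (via Proposition~\ref{prop:BMandDescent} together with the uniform convergence $F^{n!}\to r$), and then kill a principal divisor $\operatorname{div}(h)$ simply by choosing $f\in A(K)$ whose reduction agrees with $Q$ at the finitely many places below $\operatorname{supp}\operatorname{div}(h)$, so that $\widetilde Q(\operatorname{div}h)=f_*(\operatorname{div}h)=0$ by the Albanese factorisation of $f$. This avoids the paper's explicit appeal to $\Sha$-finiteness and to Theorem~\ref{thm:BrAKF} in the forward direction (though $\Sha$-finiteness is of course still lurking inside Proposition~\ref{prop:BMandDescent}). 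One small point worth flagging: \cite[Lemma~2.13]{CVV} as cited in the paper is stated for curves, so when you apply $F^{n!}\to r$ on $A(\A_K)$ you are implicitly extending that lemma to abelian varieties; the argument (coordinates $a+y$ with $y\in\mathfrak m_v$ satisfy $(a+y)^{q^m}=a+y^{q^m}\to a$) goes through identically, but you should say so rather than cite the lemma off the shelf.
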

 
 \begin{proof}
 	Let $(x_v) \in A(\A_{K,\F})^{\Br}$ and let $\psi : D(\Fbar) \to A(\Fbar)$ be the corresponding Galois-equivariant map. Extending by linearity we obtain a Galois-equivariant homomorphism $\psi' : Z^0(\Dbar) \to Z^0(\Abar)$ on the groups of $0$-cycles. The map sending a $0$-cycle $z \in Z^0(\Abar)$ to the pair $(\text{sum}(z),\deg(z)) \in A(\Fbar) \times \Z$ induces an isomorphism of the group of $0$-cycles on $\Abar$ modulo Albanese equivalence onto $A(\Fbar) \times \Z$. 
 	
 	We claim that if $z \in Z^0(\Dbar)$ is a principal divisor, then the $0$-cycle $\psi'(z)$ is albanese equivalent to $0$. To see this, suppose $z = \sum n_PP$ where $P$ are geometric points of $D$. Let $F^n$ be a power of the $\F$-Frobenius which fixes all $P$ appearing in the support of $z$. By Theorem~\ref{thm:BrAKF}, $(x_v) \in A(\A_{K,\F})^{\etaleisog}$ so it must lift to a twist of $(1-F^n) : A \to A$. Since $\Sha(K,A)$ is finite \cite[Theorem 3]{Milne} we may choose $n$ large enough so that the twist to which it lifts contains a rational point, and hence be of the form $(1-F^n) + \gamma : A \to A$ for some $\gamma \in A(K)$. Evaluating at a point $P \in D(\Fbar)$ in the support of $z$ we conclude that $\psi(P) = (1-F^n)(P) + \gamma(P) = \gamma(P)$. By linearity it follows that $\psi(z) = \gamma(z)$. By the universal property of the Jacobian of $D$, $\gamma \in A(K) = \Mor_\F(D, A)$ factors through a morphism $\gamma_0 : J_D \to A$ of abelian varieties. Hence, if the image of $z$ in $J_D$ is trivial, its image in $A$ must be trivial as well.
 	
 	The claim established in the previous paragraph implies that $\psi'$ factors through a Galois-equivariant map $\Pic(\Dbar) \to A(\Fbar)\times \Z$. We obtain a pair $(\psi,P) \in \Hom_{G_\F}(J_D(\Fbar),A(\Fbar)) \times A(\F)$ by taking $\psi$ as the restriction of the above map to $\Pic^0(\Dbar) = J_D(\Fbar)$ and defining $P$ by $\psi'(z_1) = (P,1) \in A(\Fbar)\times \Z$, where $z_1 \in Z^0(D)$ is the $0$-cycle of degree $1$ used to embed $D$ in $J_D$. This yields the map $A(\A_{K,\F})^{\Br} \to \Hom_{G_\F}(J_D(\Fbar),A(\Fbar)) \times A(\F)$.
 	
 	Now suppose $(\psi,P) \in \Hom_{G_\F}(J_D(\Fbar),A(\Fbar)) \times A(\F)$. The map $\psi_{|_D}+P : D(\Fbar) \to A(\Fbar)$ corresponds to a reduced adelic point of $A$ and it is easy to check that this yields an inverse to the map constructed above. We will show that the adelic point corresponding to $\psi_{|_D}+P$ lies in the closure of the image of $A(K)$ in $A(\A_{K,\F})$. This suffices to prove the theorem since $A(\A_{K,\F})^{\Br}$ is a closed set containing the image of $A(K)$. Furthermore, we may assume $P = 0$.
 	
 	The homomorphism $\psi : J_D(\Fbar) \to A(\Fbar)$ induces also a morphism $\psi' \in \Hom_{G_\F}(T_\textup{\'et}J_D, T_\textup{\'et}A)$ between the full \'etale Tate modules of $J_D$ and $A$, i.e., $T_\textup{\'et}A = \varprojlim_{n} A(\Fbar)[n]$. Since $\F$ is perfect, the abelian group schemes $A[n]$ split as a direct product of an \'etale and a connected group scheme \cite[Proposition 11.3]{MilneAG}. It follows that the full Tate module (profinite group scheme) splits as $TA = T_\textup{\'et}A \times A_0$, with $A_0$ a connected pro-$p$ group scheme and similiarly for $J_D$. As there are no nontrivial morphisms between $p$-primary \'etale and connected group schemes we obtain a surjective map $\Hom_{\F}(TJ_D,TA) \to \Hom_{G_\F}(T_\textup{\'et}J_D, T_\textup{\'et}A)$. Tate's theorem \cite{TateEndomorphisms, WM} gives an isomorphism $\Hom_\F(J_D,A) \otimes \hat{\Z} \to \Hom_{G_\F}(TJ_D,TA)$. From this it follows that, for every $n$, there is some $\phi_n \in \Hom_\F(J_D,A) \subset A(K)$ which agrees with $\psi$ on $J_D[n](\Fbar)$. Then the sequence $r(\phi_{n!})$ converges in $A(\A_{K,\F})$ to the adelic point corresponding to $\psi$.
 
\end{proof}

  	\begin{Corollary}\label{cor:infiniteimage}
        \label{finite}
		Suppose $\psi \colon D(\Fbar) \to A(\Fbar)$ is the map corresponding to a point $(x_v) \in A(\A_{K,\F})^{\Br}$. Then $\psi$ is either constant, in which case $(x_v) \in A(\F)$, or $\psi$ has infinite image.
	\end{Corollary}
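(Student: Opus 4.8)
The plan is to feed the structural description of $\psi$ supplied by Theorem~\ref{thm:abdescA} into an elementary divisibility argument. By that theorem, the map $\psi$ attached to $(x_v)$ has the form $\psi = \psi_{0|_D} + P$ for a unique $\psi_0 \in \Hom_{G_\F}(J_D(\Fbar),A(\Fbar))$ and $P \in A(\F)$, where $D \hookrightarrow J_D$ is the fixed embedding and $\psi_{0|_D}$ denotes the composite $D(\Fbar) \to J_D(\Fbar) \xrightarrow{\psi_0} A(\Fbar)$. Since translation by $P$ does not change whether the image is finite, it suffices to analyze $\psi_{0|_D}$. Here I would use that the subgroup of $J_D(\Fbar)$ generated by the image of $D(\Fbar)$ is all of $J_D(\Fbar)$: the differences $[Q-Q']$ of Abel--Jacobi images already generate $\Pic^0(D_{\Fbar}) = J_D(\Fbar)$, by the usual telescoping of a degree-zero divisor. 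Since $\psi_0$ is a homomorphism, it follows that $\psi_0(J_D(\Fbar))$ is exactly the subgroup of $A(\Fbar)$ generated by $\psi_{0|_D}(D(\Fbar)) = \psi(D(\Fbar)) - P$.

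The heart of the matter is that $J_D(\Fbar)$ is divisible: every point of $J_D(\Fbar)$ is defined over a finite subextension of $\Fbar/\F$, hence is torsion, and $[n] \colon J_D \to J_D$, being an isogeny, is surjective on $\Fbar$-points for every $n \ge 1$; thus $J_D(\Fbar)$ is divisible, and so is its quotient $\psi_0(J_D(\Fbar))$. Suppose now $\psi$ is non-constant; I claim its image is infinite. If it were not, then $\psi_{0|_D}(D(\Fbar)) = \psi(D(\Fbar)) - P$ would be a finite set of torsion points of $A(\Fbar)$, so the subgroup it generates --- namely $\psi_0(J_D(\Fbar))$ --- would be finite. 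A finite divisible abelian group is trivial, forcing $\psi_0 = 0$ and $\psi \equiv P$ constant, a contradiction. This establishes the dichotomy.

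It remains to treat the constant case: if $\psi \equiv a$, then Galois-equivariance of $\psi$ gives $a = \sigma(a)$ for all $\sigma \in G_\F$, so $a \in A(\F)$, and under the bijection of Lemma~\ref{lem:maps} the reduced adelic point $(x_v)$ is precisely the image of the constant morphism $D \to A$ with value $a$; that is, $(x_v) \in A(\F)$. I do not anticipate a genuine obstacle: the argument is short, and the only point warranting a moment's care is the generation claim in the first paragraph, since the degree-one $0$-cycle $z_1$ used to embed $D$ in $J_D$ need not be a single rational point --- but over $\Fbar$ this is routine.
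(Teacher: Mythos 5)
Your proof is correct and takes essentially the same route as the paper: both reduce to the homomorphism $\psi_0 \colon J_D(\Fbar)\to A(\Fbar)$ produced by Theorem~\ref{thm:abdescA} and invoke the divisibility of $J_D(\Fbar)$ to conclude that a nontrivial homomorphic image cannot be finite. Your write-up simply makes explicit the steps the paper's two-line proof leaves implicit (translation by $P$, generation of $J_D(\Fbar)$ by $D(\Fbar)$, and the constant case via Galois-equivariance).
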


	\begin{proof}
		It is enough to show that the image of the induced map $\psi:J_D(\Fbar) \to A(\Fbar)$ has infinite image. This follows since $J_D(\Fbar)$ is a divisible group and, hence, has no nontrivial finite homomorphic image.
	\end{proof}

 \begin{Corollary}\label{cor:abdescC}
 	Suppose $C/\F$ is a smooth, geometrically irreducible, proper curve with a fixed embedding $C \to J$. There is a bijection
 	\[
 		C(\A_{K,\F})^{\Br} \leftrightarrow \left\{ (\psi,P) \in \Hom_{G_\F}(J_D(\Fbar),J(\Fbar))\times J(\F) \,:\, \psi(D(\Fbar)) \subset C(\Fbar)-P \right \}\,.
 	\]
 \end{Corollary}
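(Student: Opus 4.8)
The plan is to deduce the corollary from Theorem~\ref{thm:abdescA}, applied to the abelian variety $A = J$, together with the functoriality of the descent obstruction under the fixed closed immersion $\iota \colon C \hookrightarrow J$. The map $\iota$ induces an injection $C(\A_K) \hookrightarrow J(\A_K)$ which sends $C(\A_{K,\F})$ into $J(\A_{K,\F})$ and which, under the identifications of Lemma~\ref{lem:maps}, corresponds on Galois-equivariant maps out of $D(\Fbar)$ to post-composition with $C(\Fbar) \hookrightarrow J(\Fbar)$. The key point is the identity $C(\A_K)^{\Br} = \{\, P \in C(\A_K) : \iota(P) \in J(\A_K)^{\Br} \,\}$. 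Indeed, by Proposition~\ref{prop:BMandDescent} we have $C(\A_K)^{\Br} = C(\A_K)^{\isog}$ and $J(\A_K)^{\Br} = J(\A_K)^{\isog}$, and the torsors cutting out $C(\A_K)^{\isog}$ are exactly the $\iota$-pullbacks of the isogeny torsors on $J$ cutting out $J(\A_K)^{\isog}$; since evaluating the pullback $\iota^\ast(J',G)$ at a point $P$ gives the same class in $\prod_v \HH^1(K_v,G)$ as evaluating $(J',G)$ at $\iota(P)$, a point $P \in C(\A_K)$ survives such a torsor if and only if $\iota(P)$ survives the corresponding torsor on $J$. Intersecting with reduced adelic points and using $\iota\big(C(\A_{K,\F})\big) \subset J(\A_{K,\F})$ then gives $C(\A_{K,\F})^{\Br} = \{\, P \in C(\A_{K,\F}) : \iota(P) \in J(\A_{K,\F})^{\Br} \,\}$.

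Next I would feed in Theorem~\ref{thm:abdescA} with $A = J$: a reduced adelic point of $J$ lies in $J(\A_{K,\F})^{\Br}$ if and only if the associated Galois-equivariant map $D(\Fbar) \to J(\Fbar)$ has the form $\psi_{|_D} + P$ for a (necessarily unique) pair $(\psi, P) \in \Hom_{G_\F}(J_D(\Fbar), J(\Fbar)) \times J(\F)$. Combining this with the identity of the previous paragraph and Lemma~\ref{lem:maps}, a Galois-equivariant map $D(\Fbar) \to C(\Fbar)$ represents a point of $C(\A_{K,\F})^{\Br}$ exactly when, regarded as a map to $J(\Fbar)$, it equals $\psi_{|_D} + P$ for some such pair. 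The image of $\psi_{|_D} + P$ in $J(\Fbar)$ is $\psi(D(\Fbar)) + P$ (writing $D(\Fbar)$ for its image under the chosen embedding $D \hookrightarrow J_D$), so the condition that this map take values in $C(\Fbar)$ is precisely $\psi(D(\Fbar)) \subset C(\Fbar) - P$, which is the condition appearing in the statement.

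Finally, I would package this as the assignment sending $(x_v) \in C(\A_{K,\F})^{\Br}$ to the pair $(\psi,P)$ attached by Theorem~\ref{thm:abdescA} to $\iota(x_v) \in J(\A_{K,\F})^{\Br}$: it is injective because $\iota$ and the bijection of Theorem~\ref{thm:abdescA} are, it lands in the indicated set by the previous paragraph, and it is surjective onto that set because any pair $(\psi,P)$ with $\psi(D(\Fbar)) \subset C(\Fbar) - P$ makes $\psi_{|_D} + P$ take values in $C(\Fbar)$, hence defines (via Lemma~\ref{lem:maps}) a reduced adelic point $(x_v)$ of $C$ with $\iota(x_v) \in J(\A_{K,\F})^{\Br}$, i.e.\ $(x_v) \in C(\A_{K,\F})^{\Br}$, mapping to $(\psi,P)$. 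The only step I expect to require genuine care is the functoriality identity $C(\A_K)^{\Br} = \iota^{-1}\big(J(\A_K)^{\Br}\big)$ — chiefly, verifying that the isogeny-descent torsors on $C$ really are the $\iota$-pullbacks of those on $J$ and that this correspondence is compatible with twisting — but this is already implicit in the discussion preceding Proposition~\ref{prop:BMandDescent}; everything else is bookkeeping with Lemma~\ref{lem:maps}.
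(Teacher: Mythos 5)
Your proposal is correct and mirrors the paper's argument: both reduce, via functoriality of isogeny-descent under the closed immersion $C \hookrightarrow J$, to the identity $C(\A_{K,\F})^{\Br} = C(\A_{K,\F}) \cap J(\A_{K,\F})^{\Br}$ and then invoke Theorem~\ref{thm:abdescA}. The only cosmetic difference is that the paper appeals to Theorem~\ref{thm:BrAKF} (étale isogenies at the reduced-adelic level) where you appeal to Proposition~\ref{prop:BMandDescent} (all isogenies at the full adelic level, then intersecting with $C(\A_{K,\F})$); the two routes are interchangeable here.
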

 
  \begin{proof}
  	We have $C(\A_{K,\F})^{\Br} = C(\A_{K,\F}) \cap J(\A_{K,\F})^{\Br}$ by Theorem~\ref{thm:BrAKF} and the definition of $C(\A_{K,\F})^{\etaleisog}$. A pair $(\psi,P)$ as in Theorem~\ref{thm:abdescA} corresponds to a reduced adelic point on $C$ if and only if $\psi(D(\Fbar)) + P  \subset C(\Fbar)$. 
 \end{proof}
 
 \begin{Remark}
 	Given a reduced adelic point $(x_v) \in C(\A_{K,\F})^{\Br}$ one can always choose an embedding of $C$ into $J$ such that the corresponding pair $(\psi,P)$ has $P = 0$.
 \end{Remark}
  
 \begin{Remark}
Zilber \cites{Z1,Z2} , resolving a conjecture of Bogomolov, Korotiaev and Tschinkel \cite{BKT}, has shown that if $\psi : J_D(\Fbar) \to J(\Fbar)$ is an isomorphism such that $\psi(D(\Fbar)) = \psi(C(\Fbar))$, then $\psi$ is a morphism of curves composed with a limit of Frobenius maps. In particular, the corresponding reduced adelic point lies in the closure of the image of $C(K)$ in $C(\A_{K,\F})$.
 \end{Remark}

		\begin{Corollary}\label{cor:isogenyfactor}
		Suppose $(x_v) \in C(\A_{K,\F})^{\Br}$ and $J$ has only finitely many abelian subvarieties. Then the induced $\psi :J_D(\Fbar) \to J(\Fbar)$ is either constant or surjective, in which case $J$ is an isogeny factor of $J_D$.
	\end{Corollary}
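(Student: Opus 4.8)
The plan is to pass, via Corollary~\ref{cor:abdescC}, from the adelic point to the homomorphism $\psi$, extract Zariski density of its image from the curve condition, and then play this off against the finiteness hypothesis on abelian subvarieties of $J$ through the approximating morphisms constructed in the proof of Theorem~\ref{thm:abdescA}.

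First I would record the setup. By Corollary~\ref{cor:abdescC} the point $(x_v)$ corresponds to a pair $(\psi,P)$ with $\psi\in\Hom_{G_\F}(J_D(\Fbar),J(\Fbar))$ and $\psi(D(\Fbar))\subseteq C(\Fbar)-P$, the curves being viewed inside their Jacobians via the fixed embeddings. Suppose $\psi$ is not constant, i.e.\ $\psi\neq 0$; since $J_D(\Fbar)$ has no nontrivial finite quotient (cf.\ the proof of Corollary~\ref{cor:infiniteimage}) its image $\psi(J_D(\Fbar))$ is then infinite. Because $\Sym^{n}D\to J_D$ is surjective for $n=\dim J_D$, the set $D(\Fbar)$ generates the group $J_D(\Fbar)$, so $\psi(J_D(\Fbar))$ is the subgroup generated by $\psi(D(\Fbar))$, and as $J(\Fbar)$ is torsion this forces $\psi(D(\Fbar))$ to be infinite. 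An infinite subset of the irreducible curve $C-P$ is Zariski dense in it, so the Zariski closure of $\psi(J_D(\Fbar))$ is a closed subgroup of $J$ containing $C-P$; since $C$, and hence any translate of it, generates $J$, the image $\psi(J_D(\Fbar))$ is Zariski dense in $J$.

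Next I would invoke the morphisms $\phi_n\in\Hom_\F(J_D,J)$ from the proof of Theorem~\ref{thm:abdescA}, which agree with $\psi$ on $J_D[n](\Fbar)$. Since $J_D(\Fbar)$ is a torsion group, $\psi(J_D(\Fbar))=\bigcup_n\phi_{n!}\bigl(J_D[n!](\Fbar)\bigr)\subseteq\bigcup_n\phi_{n!}(J_D)(\Fbar)$. Each $\phi_{n!}(J_D)$ is an abelian subvariety of $J$, and this is where the hypothesis enters: there are only finitely many such, so $\bigcup_n\phi_{n!}(J_D)$ is a finite union of abelian subvarieties and hence Zariski closed. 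Density of $\psi(J_D(\Fbar))$ together with irreducibility of $J$ then forces $\phi_{n_0!}(J_D)=J$ for some $n_0$, i.e.\ $\phi_{n_0!}\colon J_D\to J$ is surjective; by Poincar\'e reducibility $J$ is an isogeny factor of $J_D$. Applying the same reasoning to the cofinal subfamilies $\{J_D[n!]:n\ge m\}$ shows moreover that $\phi_{n!}$ is surjective for infinitely many $n$.

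The remaining point — that $\psi$ is surjective as a map of sets, not merely of Zariski‑dense image — is the delicate one, and I expect it to be the main obstacle. My approach would be to check surjectivity one prime at a time: writing $\psi=\bigoplus_\ell\psi_\ell$ on primary components, $\psi$ is surjective iff each $V_\ell\psi\colon V_\ell J_D\to V_\ell J$ is surjective (with $V_pJ$ the $p$‑adic Tate module). For $\ell\neq p$ I would try to deduce surjectivity of $V_\ell\psi$ from the curve condition by restricting $\psi$ to the set of $d\in D(\Fbar)$ whose order in $J_D$ is prime to $\ell$ — a set that is Zariski dense in $D$, using that the order of $J_D$ over an extension of degree $k$ is prime to $\ell$ for a cofinal set of $k$ — and arguing that otherwise $\psi(D(\Fbar))$ would be forced into a proper closed subgroup of $J$, contradicting that $C$ generates $J$. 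The prime $\ell=p$, and the possibility that the $\ell$‑adic image of $\psi$ drops rank while its Zariski closure remains all of $J$, are exactly the places where this argument needs more care; it is conceivable that in the sharpest rigorous form one can only isolate the rational points via the weaker (but, for the applications in this section, sufficient) conclusion that $J$ is an isogeny factor of $J_D$.
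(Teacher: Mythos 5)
Your argument reaches the core conclusion --- that a non-constant $\psi$ forces $J$ to be an isogeny factor of $J_D$ --- by a route genuinely different from the paper's. The paper lifts $(x_v)$ to $J(\A_{K,\F})^{\Br}=\overline{J(K)}\cap J(\A_{K,\F})$, writes $(x_v)=\lim\phi_n$ with $\phi_n\in J(K)$, and plays the hypothesis directly against the curve: if $\phi_n\colon J_D\to J$ is not surjective then $\phi_n(D)$ lies inside a translate of a proper abelian subvariety, and because $C$ meets any such translate in a number of points that is uniformly bounded (here is where finitely many abelian subvarieties enters), the set $\phi_n(D)\cap C$ is uniformly bounded; were this the case for infinitely many $n$, the limit $\psi(D(\Fbar))$ would be finite, contradicting Corollary~\ref{cor:infiniteimage}. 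You instead establish that $\psi(J_D(\Fbar))$ is Zariski dense in $J$ (using that $C$ generates $J$, the group--closure step being correct), observe that this image is contained in $\bigcup_n\phi_{n!}(J_D)$, and invoke the finiteness hypothesis to conclude that union is a \emph{finite} union of abelian subvarieties, whence irreducibility of $J$ forces some $\phi_{n_0!}(J_D)=J$. Both arguments are valid and give the isogeny--factor conclusion; yours cleanly separates the density and finiteness ingredients, while the paper's keeps $C$ in play throughout via intersection numbers.

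On the surjectivity of $\psi$ as a map $J_D(\Fbar)\to J(\Fbar)$: you are right to single this out as the delicate point. The paper's proof explicitly reduces the corollary to showing the $\phi_n$ are eventually surjective, and one should note that this alone does not formally imply that $\psi$ is surjective on $\Fbar$--points; for instance, if $J_D=J=E$ with $\End_\F(E)=\Z$, a homomorphism $\psi$ that kills the $\ell_0$--primary part for a single prime $\ell_0\ne p$ but is the identity elsewhere has every approximating $\phi_n$ equal to a nonzero integer, hence surjective, while $\psi$ is not. Ruling such $\psi$ out must exploit the curve constraint $\psi(D(\Fbar))\subset C(\Fbar)-P$, which is exactly where your $\ell$--adic sketch heads, but (as you acknowledge) that sketch is not a complete argument, particularly at $\ell=p$ and in controlling the drop in $\ell$--adic rank. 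The paper is itself terse at this very step, so your honest flag is the right assessment: your proposal robustly delivers the isogeny--factor statement and the dichotomy ``constant or infinite image,'' and leaves the stronger set--surjectivity as unfinished.
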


	\begin{proof}
		By functoriality of the Brauer pairing we have $(x_v) \in J(\A_{K,\F})^{\Br}$. Now $J(\A_{K,\F})^{\Br} = \overline{J(K)} \cap J(A_{K,\F})$ in $J(\A_{K})$ by Proposition~\ref{prop:BMandDescent}, so $(x_v) = \lim \phi_n : D \to J$ for some sequence of $\phi_n \in J(K)$. It suffices to show that the induced maps $\phi_n:J_D \to J$ are eventually surjective. If $\phi_n:J_D \to J$ is not surjective, then $\phi_n(D)$ is contained in a translate of a proper abelian subvariety. The intersection $C \cap (x + A)$ of $C \subset J$ with a translate of a proper abelian subvariety $A$ is finite. If there are only finitely many proper abelian subvarieties, then these intersection numbers are bounded. But then so is $\phi_n(D) \cap C$. This implies $\psi(D(\Fbar))$ is finite, so $\psi$ is constant by Corollary~\ref{cor:infiniteimage}.
	\end{proof}

      \begin{proof}[Proof of Theorem~\ref{thm:Brgenus}]
     		Note that the equality $C(K) = C(\F)$ is trivial and that it then follows from Theorem~\ref{Thm:FrobDescent} that $C(\A_K)^{\Br} = C(\A_{K,\F})^{\Br}$. Hence, it is enough to show that $C(\A_{K,\F})^{\Br} =C(\F)$. Suppose $(x_v) \in C(\A_{K,\F})^{\Br}$ and let $\psi : J_D(\Fbar) \to J(\Fbar)$ be the corresponding homomorphism as in Corollary~\ref{cor:abdescC}. We may choose embeddings $D \subset J_D$ and $C \subset J$, such that $\psi$ restricted to $D(\Fbar)$ is the map corresponding to $(x_v)$. Let $I = \psi(J_D(\Fbar))$. Since $J_D(\Fbar)$ is generated by the divisors of degree $g = g_D$ on $D$, we have $I  \subset W^g(C)$, where $W^g(C)$ is the image of the $g$-th symmetric power of $C$ in $J$ under the map induced by the embedding $C \to J$. As $I$ is a topological subgroup of $J(\Fbar)$ with the Zariski topology, its Zariski closure $\overline{I}$ is an algebraic subgroup of $J$ contained in $W^g(C)$. Since $g = \dim W^g(C) < \dim J$, we have that $C \cap \overline{I}$ is a proper algebraic subset of $C$ and, hence, is finite. As this intersection contains $\psi(D(\Fbar))$ we conclude by applying Corollary~\ref{finite}.
      \end{proof}	

\section*{Acknowledgements}
The authors were supported by the Marsden Fund Council administered by the Royal Society of New Zealand. 
Part of this research was carried out during the trimester ``Reinventing Rational Points'' at the Institut Henri Poincar\'e (IHP) and the authors would like to thank the IHP and the organizers of the trimester.
They would also like to thank Damian R\"ossler for discussions related to the topic of this paper.

	
\section{References}

\begin{biblist}

\bib{AM}{article}{
author={Artin, Michael},
author={Milne, James S.},
TITLE = {Duality in the flat cohomology of curves},
   JOURNAL = {Invent. Math.},
    VOLUME = {35},
      YEAR = {1976},
     PAGES = {111--129},
     }
     
\bib{BKT}{article}{
   author={Bogomolov, Fedor},
   author={Korotiaev, Mikhail},
   author={Tschinkel, Yuri},
   title={A Torelli theorem for curves over finite fields},
   journal={Pure Appl. Math. Q.},
   volume={6},
   date={2010},
   number={1, Special Issue: In honor of John Tate.},
   pages={245--294},
   issn={1558-8599},
}

\bib{BruinStoll}{article}{
   author={Bruin, Nils},
   author={Stoll, Michael},
     TITLE = {Deciding existence of rational points on curves: an
              experiment},
   JOURNAL = {Experiment. Math.},
  FJOURNAL = {Experimental Mathematics},
    VOLUME = {17},
      YEAR = {2008},
    NUMBER = {2},
     PAGES = {181--189},
}

\bib{BV}{article}{
AUTHOR = {Buium, Alexandru}
AUTHOR ={Voloch, Jos\'{e} Felipe},
     TITLE = {Reduction of the {M}anin map modulo {$p$}},
   JOURNAL = {J. Reine Angew. Math.},
  FJOURNAL = {Journal f\"{u}r die Reine und Angewandte Mathematik. [Crelle's
              Journal]},
    VOLUME = {460},
      YEAR = {1995},
     PAGES = {117--126},
}

\bib{CVV}{article}{
   author={Creutz, Brendan},
   author={Viray, Bianca},
   author={Voloch, Jos\'{e} Felipe},
   title={The $d$-primary Brauer-Manin obstruction for curves},
   journal={Res. Number Theory},
   volume={4},
   date={2018},
   number={2},
   pages={Art. 26, 16},
   issn={2363-9555},
}

\bib{DH}{article}{
   author={Demarche, Cyril},
   author={Harari, David},
   title={Artin-Mazur-Milne duality for fppf cohomology},
   journal={Algebra Number Theory},
   volume={13},
   date={2019},
   number={10},
   pages={2323--2357},
   issn={1937-0652},
}

\bib{GAT}{article}{
   author={Gonz\'{a}lez-Avil\'{e}s, Cristian D.},
   author={Tan, Ki-Seng},
   title={A generalization of the Cassels-Tate dual exact sequence},
   journal={Math. Res. Lett.},
   volume={14},
   date={2007},
   number={2},
   pages={295--302},
   issn={1073-2780},
}

\bib{GA-T}{article}{
   author={Gonz\'{a}lez-Avil\'{e}s, Cristian D.},
   author={Tan, Ki-Seng},
   title={On the Hasse principle for finite group schemes over global
   function fields},
   journal={Math. Res. Lett.},
   volume={19},
   date={2012},
   number={2},
   pages={453--460},
   issn={1073-2780},
}

\bib{AGI}{book}{
   author={G\"{o}rtz, Ulrich},
   author={Wedhorn, Torsten},
   title={Algebraic geometry I},
   series={Advanced Lectures in Mathematics},
   note={Schemes with examples and exercises},
   publisher={Vieweg + Teubner, Wiesbaden},
   date={2010},
   pages={viii+615},
   isbn={978-3-8348-0676-5},
}

\bib{MilneAG}{book}{
   author={Milne, J. S.},
   title={Algebraic groups},
   series={Cambridge Studies in Advanced Mathematics},
   volume={170},
   publisher={Cambridge University Press, Cambridge},
   date={2017},
   pages={xvi+644},
   isbn={978-1-107-16748-3},
}

	\bib{MilneADT}{book}{
	   author={Milne, J. S.},
	   title={Arithmetic duality theorems},
	   series={Perspectives in Mathematics},
	   volume={1},
	   publisher={Academic Press, Inc., Boston, MA},
	   date={1986},
	   pages={x+421},
	   isbn={0-12-498040-6},
	}

 \bib{Milne}{article}{
 AUTHOR = {Milne, J. S.},
     TITLE = {The {T}ate-\v{S}afarevi\v{c} group of a constant abelian variety},
   JOURNAL = {Invent. Math.},
  FJOURNAL = {Inventiones Mathematicae},
    VOLUME = {6},
      YEAR = {1968},
     PAGES = {91--105},
     }

\bib{Poonen}{article}{
   author={Poonen, Bjorn},
   title={Heuristics for the Brauer-Manin obstruction for curves},
   journal={Experiment. Math.},
   volume={15},
   date={2006},
   number={4},
   pages={415--420},
   issn={1058-6458}
}

\bib{PoonenRatPoints}{book}{
   author={Poonen, Bjorn},
   title={Rational points on varieties},
   series={Graduate Studies in Mathematics},
   volume={186},
   publisher={American Mathematical Society, Providence, RI},
   date={2017},
   pages={xv+337},
   isbn={978-1-4704-3773-2},
}

\bib{PoonenVoloch}{article}{
   author={Poonen, Bjorn},
   author={Voloch, Jos\'e Felipe},
   title={The Brauer-Manin obstruction for subvarieties of abelian varieties
   over function fields},
   journal={Ann. of Math. (2)},
   volume={171},
   date={2010},
   number={1},
   pages={511--532},
   issn={0003-486X},
}

\bib{ARM}{thesis}{
author={Resende de Macedo, A.},
title={Differential fppf descent obstructions},
date={2017},
note={Ph.D. Thesis, University of Texas at Austin},
}

\bib{Rossler1}{article}{
author={R\"ossler, Damian},
title={On the group of purely inseparable points of an abelian variety defined over a function field of positive characteristic II.}
  journal={Algebra Number Theory},
   volume={14},
   date={2020},
   number={5},
   pages={1123--1173},
}

\bib{Rossler2}{article}{
   author={R\"ossler, Damian},
   title={Le groupe de Selmer des isog\'enies de hauteur un},
   note={preprint},
   }

\bib{Scharaschkin}{book}{
   author={Scharaschkin, Victor},
   title={Local-global problems and the Brauer-Manin obstruction},
   note={Thesis (Ph.D.)--University of Michigan},
   publisher={ProQuest LLC, Ann Arbor, MI},
   date={1999},
   pages={59},
   isbn={978-0599-63464-0},
}

\bib{Stoll}{article}{
   author={Stoll, Michael},
   title={Finite descent obstructions and rational points on curves},
   journal={Algebra Number Theory},
   volume={1},
   date={2007},
   number={4},
   pages={349--391},
   issn={1937-0652},
}

\bib{TateEndomorphisms}{article}{
   author={Tate, John},
   title={Endomorphisms of abelian varieties over finite fields},
   journal={Invent. Math.},
   volume={2},
   date={1966},
   pages={134--144},
   issn={0020-9910},
}

\bib{WM}{article}{
AUTHOR = {Waterhouse, W. C.},
AUTHOR = {Milne, J. S.},
     TITLE = {Abelian varieties over finite fields},
 BOOKTITLE = {1969 {N}umber {T}heory {I}nstitute ({P}roc. {S}ympos. {P}ure
              {M}ath., {V}ol. {XX}, {S}tate {U}niv. {N}ew {Y}ork, {S}tony
              {B}rook, {N}.{Y}., 1969)},
     PAGES = {53--64},
 PUBLISHER = {Amer. Math. Soc., Providence, R.I.},
      YEAR = {1971},
}

\bib{Z1}{article}{
AUTHOR = {Zilber, Boris},
     TITLE = {A curve and its abstract {J}acobian},
   JOURNAL = {Int. Math. Res. Not. IMRN},
      YEAR = {2014},
    NUMBER = {5},
     PAGES = {1425--1439},
}

\bib{Z2}{article}{
AUTHOR = {Zilber, Boris},
TITLE = {A curve and its abstract {J}acobian},
NOTE = {Corrected version of \cite{Z1}, preprint},
URL={http://people.maths.ox.ac.uk/zilber/JacobianCor.pdf},
}

\end{biblist}


\end{document}